\documentclass[a4paper,10pt]{article}

\usepackage{amsthm}
\usepackage{amsmath}
\usepackage{amsxtra}
\usepackage{amssymb}
\usepackage{comment}
\usepackage{enumitem}
\usepackage{undertilde}
\usepackage{mathrsfs}
\usepackage{thmtools}
\usepackage{hyperref}
\usepackage{bbm}

\newcommand{\RR}{\mathbb R}

\newcommand{\PP}{\mathbb P}
\newcommand{\BB}{\mathbb B}
\newcommand{\sub}{\subseteq}
\newcommand{\cross}{\times}

\newcommand{\inter}{\cap}
\renewcommand{\int}{\inter}

\newcommand{\om}{\omega}

\newcommand{\OR}{\mathrm{OR}}

\newcommand{\cut}{\backslash}

\newcommand{\Tt}{\mathcal{T}}

\newcommand{\Uu}{\mathcal{U}}

\newcommand{\rg}{\mathrm{rg}}

\newcommand{\crit}{\mathrm{cr}}

\newcommand{\rest}{\!\upharpoonright\!}
\newcommand{\com}{\circ}

\newcommand{\lh}{\mathrm{lh}}
\newcommand{\Ult}{\mathrm{Ult}}

\newcommand{\sats}{\models}

\newcommand{\HC}{\mathrm{HC}}
\newcommand{\ZFC}{\mathsf{ZFC}}

\newcommand{\Coll}{\mathrm{Col}}

\newcommand{\id}{\mathrm{id}}

\newcommand{\conc}{\ \widehat{\ }\ }

\DeclareMathOperator{\card}{card}
\DeclareMathOperator{\cof}{cof}

\DeclareMathOperator{\rank}{rank}

\newcommand{\OD}{\mathrm{OD}}

\newcommand{\Yy}{\mathcal{Y}}

\newcommand{\Xx}{\mathcal{X}}

\declaretheorem[name=Definition,style=definition,qed=$\dashv$,numberwithin=section]{definition}
\declaretheorem[name=Definition,style=definition,numbered=no,qed=$\dashv$]{definition*}
\declaretheorem[name=Definition,style=definition,qed=$\dashv$,sibling=definition]{dfn}
\declaretheorem[name=Definition,style=definition,numbered=no,qed=$\dashv$]{dfn*}

\declaretheorem[name=Theorem,style=plain,sibling=dfn]{tm}
\declaretheorem[name=Theorem,style=plain,sibling=dfn]{tm*}

\declaretheorem[name=Theorem,style=plain,numbered=no]{theorem*}

\declaretheorem[name=Lemma,style=plain,sibling=dfn]{lem}

\declaretheorem[name=Corollary,style=plain,numbered=no]{corollary*}
\declaretheorem[name=Corollary,style=plain,sibling=dfn]{cor}
\declaretheorem[name=Corollary,style=plain,numbered=no]{cor*}
\declaretheorem[name=Remark,style=definition,sibling=dfn]{rem}

\declaretheoremstyle[headfont=\scshape]{claimstyle}
\declaretheorem[name=Claim,style=claimstyle]{clm}

\declaretheorem[name=Subclaim,style=claimstyle]{sclm}

\newcommand{\lex}{\mathrm{lex}}

\newcommand{\Hom}{\mathrm{Hom}}
\newcommand{\strength}{\mathrm{strength}}
\newcommand{\WO}{\mathrm{WO}}
\newcommand{\leftmost}{\mathrm{left}}
\author{Farmer Schlutzenberg and John R.~Steel}
\title{Stationary tower free homogeneously Suslin scales}
\begin{document}

\maketitle
\begin{abstract}
Let $\lambda$ be a limit of Woodin cardinals.
It was shown by the second author that 
the pointclass of ${<\lambda}$-homogeneously Suslin sets has the scale property.
We give a new proof of this fact,
which avoids the use of stationary tower forcing.
\end{abstract}

\section{Introduction}

We work in $\ZFC$. A key tool used in the proof of Woodin's derived model theorem
(see \cite{steel_dmt})
is Woodin's stationary tower forcing (see \cite{stattower}).
In \cite{stfdmt},
the second author gave
a new proof of the derived model theorem (the older version thereof, that is),
with arguments involving iteration trees replacing
the central uses of the stationary tower in the proof. However, the
proof still made implicit use of 
the stationary tower,
because it appealed to the following theorem of 
the second author
(see \cite{steel_dmt} and \cite{stfdmt} for this proof and other definitions and background):

\begin{tm}[Steel]\label{thm:Hom_scale} Let $\lambda$ be a limit of Woodin cardinals.
Then every $\Hom_{<\lambda}$ set of reals has a $\Hom_{<\lambda}$ scale.
\end{tm}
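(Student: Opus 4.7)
The plan is to construct the required $\Hom_{<\lambda}$ scale on $A$ directly from its homogeneity systems, using iteration-tree absoluteness at Woodin cardinals below $\lambda$ to replace the role of stationary tower absoluteness in the original argument. Fix $\kappa<\lambda$ and a $\kappa$-homogeneity system $\muvec=\langle\mu_s:s\in\om^{<\om}\rangle$ witnessing $A=p[T]$. For $x\in A$, the tower $\langle\mu_{x\rest n}:n<\om\rangle$ is countably complete, so the direct limit $M^x_\infty$ of the associated ultrapowers is wellfounded. Define a norm $\varphi_n(x)$ to be the rank in $M^x_\infty$ of a canonical ordinal invariant computable from $\mu_{x\rest n}$ (for instance, a Kunen--Martin seed for $T$, sent forward by $i_{\mu_{x\rest n},\infty}$). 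A standard verification, using commutativity of the direct limit system for lower semi-continuity and countable completeness of the limit tower for closure, shows that $\vec\varphi=\langle\varphi_n\rangle_{n<\om}$ is a scale on $A$ in the classical sense.

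The crucial step is to show that each prewellordering $\leq_{\varphi_n}$ itself lies in $\Hom_{<\lambda}$. Given $\kappa'$ with $\kappa<\kappa'<\lambda$, we construct a $\kappa'$-homogeneous tree $S$ projecting to $\leq_{\varphi_n}$. Fix a Woodin cardinal $\delta\in(\kappa,\kappa')$ and a coarse premouse $\Nn$ which correctly computes $\muvec$ and in which $\delta$ remains Woodin. A pair $(x,y)$ lies in $\leq_{\varphi_n}$ iff it can be made generic over a countable iterate of a countable hull $\Nbar$ of $\Nn$ by a genericity iteration at the image of $\delta$, in such a way that the iterate correctly evaluates and compares the images of $\varphi_n(x)$ and $\varphi_n(y)$ using its image of $\muvec$. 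The tree $S$ searches for such countable iterations together with side ordinal certificates bookkeeping the comparison. The $\kappa'$-homogeneity measures for $S$ are built by composing the original measures from $\muvec$ with measures derived from the extenders used in the genericity iteration; Woodinness of $\delta$ guarantees countable completeness of the resulting composite towers, and thereby $\kappa'$-completeness of the associated measures on $S$.

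The main obstacle is precisely this second step: producing $S$ and verifying its $\kappa'$-homogeneity. In the stationary tower proof, one used the tower to generically realize $A$ and its homogeneity system as simply definable objects in a generic extension, from which a homogeneous tree for $\leq_{\varphi_n}$ could be extracted by a standard recipe. In the present setting, the analogous absoluteness must be witnessed directly by iteration trees on $\Nn$ and their branch structure. Setting up $\Nn$ so that $\muvec$ is computed correctly and $\delta$ remains Woodin, proving the required genericity iteration lemma at $\delta$ while carrying the $\muvec$-data along, and verifying countable completeness of the composite measure towers on $S$, together constitute the technical heart of the argument. Once each $\leq_{\varphi_n}$ has been certified $\Hom_{<\lambda}$, the scale $\vec\varphi$ is visibly $\Hom_{<\lambda}$ as required.
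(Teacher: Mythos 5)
Your proposal does not constitute a proof: you explicitly defer the ``technical heart''---constructing the $\kappa'$-homogeneous tree $S$ and verifying its completeness---and that deferred step is essentially the whole theorem. Beyond incompleteness, the sketch has a genuine structural gap. Your step (1) proposes norms $\varphi_n(x)$ defined by pushing a ``Kunen--Martin seed'' for $T$ through the ultrapower embeddings $i_{\mu_{x\rest n},\infty}$. This is too vague to verify, and it is not the route the paper takes; more importantly, any direct definition of this kind only exports the difficulty to the verification that $\leq_{\varphi_n}\in\Hom_{<\lambda}$, which you then leave open.

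The paper's stationary-tower-free argument is organized quite differently. It does not work with the homogeneity system for $A$ at all. Instead it reduces Theorem~\ref{thm:Hom_scale} to Corollary~\ref{cor:local} (a scale on a $(\delta_3+1)$-universally Baire set, $<\delta_0$-homogeneously Suslin), and the heart of the proof is Lemma~\ref{lem:capturing_mouse}: from a pair $U,W$ of $(\delta_3+1)$-absolutely complementing trees one builds Martin--Solovay trees $S,T$, then a countable coarse premouse $M$ with $S^M,T^M\in M$ and a \emph{decent $\delta_2$-iteration strategy} $\Sigma$ with strong hull condensation such that $\Sigma\rest\HC$ is $<\delta_2$-hom Suslin, and such that iterates of $M$ project $S^M,T^M$ correctly inside $p[U],p[W]$. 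The scale is then defined on $p[T]$ via leftmost branches through $T^{M^\Xx_\infty}_x$ for terminal inflations $\Xx$ of a given tree (Claims~\ref{clm:stabilize_lx}--\ref{clm:compare_lx,ly}), and shown to be $\Delta^1_2(\Sigma\rest\HC)$, hence $<\delta_0$-hom Suslin. The crucial obstacle your sketch does not confront is the one flagged in Remark~\ref{rem:bug}: one needs the mouse iterable for certain \emph{uncountable} length nice trees, and getting this requires the Martin--Solovay trees $S,T$ (fixed by iteration maps, Claim~\ref{clm:S,T_fixed}), the auxiliary wellorder $W$, and the choice of $\delta_\eta$ least Woodin in $L(V_{\delta_\eta},W\rest V_{\delta_\eta},S,T)$, so that uniqueness of cofinal wellfounded branches (Claims~\ref{clm:unique_cof_wfd_branch},~\ref{clm:V_iterable}) can be arranged. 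None of this machinery appears in your proposal, and without it the ``iteration-tree absoluteness'' you invoke is not actually available.
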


The standard proof of Theorem \ref{thm:Hom_scale} (see \cite[Theorem 
4.3]{steel_dmt}) also uses the stationary tower.
This is the only remaining appeal to the stationary tower in
the proof given in \cite{stfdmt}.
Given the title of \cite{stfdmt}, it therefore seems a worthy aim
to find a proof of Theorem \ref{thm:Hom_scale}
which also avoids 
appeal to the stationary tower.
This is the burden of this note.
We remark that there is still no known proof of the
Woodin's improved derived model 
theorem which avoids the stationary tower.

Recall the following theorem, which follows from various results of Martin, Steel and Woodin:

\begin{tm}[Martin, Steel, Woodin]\label{thm:MSW}
 Let $\lambda$ be a limit of Woodin cardinals and $A\sub\RR$.
 Then the following are equivalent:
 \begin{enumerate}[label=--]
  \item $A\in\Hom_{<\lambda}$,
  \item $A$ is $\lambda$-universally Baire,
  \item $A$ is ${<\lambda}$-weakly homogeneously Suslin.
 \end{enumerate}
\end{tm}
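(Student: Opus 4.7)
The plan is to establish the cyclic implications (1)$\Rightarrow$(2)$\Rightarrow$(3)$\Rightarrow$(1), drawing on classical work of Martin, Steel, and Woodin for each.

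For (1)$\Rightarrow$(2) I would argue as follows. Suppose $A=p[T]$ with $T$ a $<\kappa$-homogeneous tree and $\tilde T$ a $<\kappa$-homogeneous tree for $\RR\setminus A$, where $\kappa<\lambda$. The countable completeness of the homogeneity measures gives, for each real $x$, an absolute characterization of membership in $p[T]$ via the well-foundedness of a canonical tower of measures on the section $T_x$. This characterization persists in any generic extension by a poset of size $<\kappa$, so the projections $p[T]$ and $p[\tilde T]$ remain complementary there. Since $\kappa$ may be chosen arbitrarily large below $\lambda$, $A$ is $\lambda$-universally Baire.

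For (2)$\Rightarrow$(3) I would appeal to Woodin's theorem. Fix a Woodin cardinal $\delta<\lambda$ and trees $T,\tilde T$ witnessing $\delta^{++}$-universal Baireness of $A$. The classical proof processes this data through generic embeddings $j\colon V\to M$ obtained from countable stationary tower forcing below $\delta$: absoluteness of the statement ``$x\in p[T]$'' under such embeddings is packaged into a countably complete system of $<\delta$-complete measures whose projection is $A$, showing $A$ is $<\delta$-weakly homogeneously Suslin. This is the only direction in which Woodinness is used to construct \emph{measures} rather than just trees.

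For (3)$\Rightarrow$(1) I would apply the Martin--Solovay construction twice. Assume $A$ is $<\kappa$-weakly homogeneously Suslin and fix Woodin cardinals $\kappa\le\delta_{0}<\delta_{1}<\lambda$. The Martin--Solovay tree built from a weakly homogeneous system for $A$ projects to $\RR\setminus A$; the Woodinness of $\delta_{0}$ ensures that the natural tower of measures on this tree is countably complete with $<\delta_{0}$-complete measures, so $\RR\setminus A$ is $<\delta_{0}$-homogeneously Suslin and, in particular, $<\delta_{0}$-weakly homogeneously Suslin. A second application of the Martin--Solovay construction at $\delta_{1}$ then shows that $A=\RR\setminus(\RR\setminus A)$ is $<\delta_{1}$-homogeneously Suslin, and hence $<\lambda$-homogeneously Suslin.

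The main obstacle is the implication (2)$\Rightarrow$(3): extracting a weakly homogeneous \emph{system of measures} from a mere $\lambda$-universally Baire tree representation genuinely requires the Woodin cardinal, and is precisely the step at which stationary tower forcing enters the standard proof. The remaining implications are, by comparison, bookkeeping applications of countable completeness of homogeneity measures and of the Martin--Solovay construction.
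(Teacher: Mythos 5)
The paper does not prove Theorem~\ref{thm:MSW}; it states it as a known result of Martin, Steel, and Woodin and uses it as a black box (the paper's own citation trail is to \cite{projdet}, \cite{steel_dmt}, and Woodin's unpublished work). So there is no paper proof to compare against. Evaluated on its own, your outline of the standard cyclic argument is structurally correct but has a few points that need attention.

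In your argument for (3)$\Rightarrow$(1), you write ``Assume $A$ is $<\kappa$-weakly homogeneously Suslin and fix Woodin cardinals $\kappa\le\delta_{0}<\delta_{1}<\lambda$.'' This is inverted: the Martin--Steel theorem you invoke requires the weak homogeneity system to be complete \emph{beyond} the Woodin cardinal at which one applies it, i.e.\ one needs a $\delta^+$-complete weak homogeneity system and $\delta$ Woodin to conclude $\RR\setminus A$ is ${<}\delta$-homogeneously Suslin. So one should pick Woodins $\delta_0<\delta_1<\lambda$ and then use that $A$, being ${<}\lambda$-weakly homogeneously Suslin, admits a $\delta_1^+$-complete weak homogeneity system; Martin--Steel at $\delta_1$ gives $\RR\setminus A\in\Hom_{<\delta_1}$, hence $\delta_0^+$-weakly homogeneously Suslin, and a second application at $\delta_0$ gives $A\in\Hom_{<\delta_0}$, which since $\delta_0$ can be taken arbitrarily large below $\lambda$ yields $A\in\Hom_{<\lambda}$. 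Your writing ``$\kappa\le\delta_0$'' suggests you are trying to run Martin--Steel at a Woodin above the completeness level, which does not work.

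In (1)$\Rightarrow$(2) you assume at the outset a homogeneous tree $\tilde T$ for the complement. Closure of $\Hom_{<\lambda}$ under complements is true, but it is essentially part of what is being established (it flows through the same Martin--Solovay mechanism you use for (3)$\Rightarrow$(1)), so invoking it at the start is a little circular in exposition. The cleaner route is to note that if $T$ is $\kappa^+$-homogeneous with $p[T]=A$, then the Martin--Solovay tree built from a $\kappa^+$-complete homogeneity system for $T$ projects to $\RR\setminus A$ and does so absolutely in size-$\le\kappa$ generic extensions, giving the needed absolutely complementing pair directly.

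Finally, a point of emphasis rather than correctness: you single out (2)$\Rightarrow$(3) (Woodin's Theorem~\ref{thm:Woodin}) as ``precisely the step at which stationary tower forcing enters the standard proof.'' That is historically accurate for Woodin's original argument, but the paper's point in citing \cite[Theorem 1.2]{stfdmt} is exactly that Steel has a stationary-tower-free proof of this step. Since the whole purpose of the present paper is to eliminate the stationary tower from the scale construction, it would be misleading to leave the reader with the impression that (2)$\Rightarrow$(3) still requires it.
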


One component of the standard proof of this result is the following theorem of Woodin:

\begin{tm}[Woodin]\label{thm:Woodin}
 Let $\delta$ be Woodin. Let $A\sub\RR$ be $(\delta+1)$-universally Baire.
 Then $A$ is ${<\delta}$-weakly homogeneously Suslin.
\end{tm}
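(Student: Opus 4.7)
Fix $\kappa<\delta$; since $\kappa$ is arbitrary below $\delta$, it suffices to show $A$ is $\kappa^+$-weakly homogeneously Suslin. My plan has two ingredients: the tree representation of $A$ supplied by $(\delta+1)$-universal Baireness, and a supply of reflecting extenders below $\delta$ supplied by the Woodinness of $\delta$.

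By $(\delta+1)$-uB I fix trees $T,S$ on $\omega\times\eta$ with $p[T]=A$, $p[S]=\RR\setminus A$, and this complementation persisting in $V[G]$ for every $G$ generic over a poset of cardinality $\leq\delta$. By the standard tree-reduction techniques (hulls and collapses) I may arrange $\eta<\delta$, so that any branch $f\in[T_x]$ is a function $\omega\to\delta$ with $\sup(\range f)<\delta$. Applying Woodinness of $\delta$ to $T$ viewed as a bounded subset of $V_\delta$ yields, for each $\kappa<\kappa'<\delta$ and each $\lambda<\delta$, an extender $E\in V_\delta$ with $\crit(E)=\kappa'$, $V_\lambda\subseteq\Ult(V,E)$, and $j_E(T)\cap V_\lambda=T\cap V_\lambda$.

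From these extenders I would define the weak-homogeneity system as follows. For each pair $(s,t)\in\omega^{<\omega}\times\omega^{<\omega}$ with $|s|=|t|$, the real fragment $t$ is used to code a coherent choice of branch data for $T$ above $s$: an ordinal sequence $u=u_{s,t}\in\eta^{|s|}$ with $(s,u)\in T$, together with an extender $E_{s,t}$ reflecting $T$ and satisfying $\crit(E_{s,t})>\max(\kappa,\sup u)$. The measure $\mu_{s,t}$ on $\eta^{|s|}$ is then the derived measure $X\in\mu_{s,t}\iff u\in j_{E_{s,t}}(X)$. Each $\mu_{s,t}$ is $\kappa^+$-complete because $\crit(E_{s,t})>\kappa$, and the projection-compatibility of the $\mu_{s,t}$ as $(s,t)$ extends follows, via a coherent choice of the $E_{s,t}$, from the identity $j_E(X)\cap V_{\crit(E)}=X\cap V_{\crit(E)}$ applied below the common critical-point bound.

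For the verification of $A=A_{\vec\mu}$: if $x\in A$, pick any $f\in[T_x]$ and a real $y$ coding $f$ via the extender data; then $X_n\in\mu_{x\rest n,y\rest n}$ unfolds to $f\rest n\in j_{E_n}(X_n)$, and since $\crit(E_n)>\sup(f\rest n)$ this is equivalent to $f\rest n\in X_n$, so $f$ itself witnesses countable completeness of the tower. Conversely, if $x\notin A$, then a countably complete tower $\langle\mu_{x\rest n,y\rest n}\rangle$ would produce, by the same critical-point reflection, a branch through $T_x$ in $V$, contradicting $x\notin p[T]=A$. The main obstacle is not the manufacture of the individual measures (which is straightforward from Woodinness once the trees are in place) but rather the simultaneous coding of an ordinal branch of $T$ as a real witness $y$ together with a coherent family $\{E_{s,t}\}$ giving a genuine projective system; in the classical proof this packaging step is exactly where stationary tower forcing at $\delta$ is invoked, absorbing all relevant extenders into a single generic elementary embedding.
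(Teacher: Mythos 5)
Your attempt is not a proof, and the route it sketches has concrete problems beyond the step you flag as ``the main obstacle.''

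First, the reduction to $\eta<\delta$ is false. If $T$ is a tree on $\om\cross\eta$ with $\eta<\delta$ that witnesses $(\delta+1)$-universal Baireness, then after forcing with $\Coll(\om,\delta)$ the ordinal $\eta$ becomes countable, so $p[T]$ (and hence $A$) is $\utilde{\Sigma}^1_1$ in the extension. But a general $(\delta+1)$-uB set need not become $\utilde{\Sigma}^1_1$ after collapsing $\delta$ --- already the set of codes of countable wellorders gives a counterexample. The absolutely complementing trees one gets genuinely live on $\om\cross\eta$ with $\eta>\delta$ (typically around $(2^\delta)^+$), and this is essential: one needs enough ordinals to catch the branches that appear after collapsing. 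Since $T$ is not a bounded subset of $V_\delta$, the Woodinness of $\delta$ does not directly give you extenders reflecting $T$, only extenders reflecting $T\cap V_\delta$, which does not control the branches through $T_x$ that matter.

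Second, the measures $\mu_{s,t}$ you define are degenerate. You require $\crit(E_{s,t})>\sup u_{s,t}$, so $u_{s,t}$ is an ordinal tuple below the critical point; then for any $X\sub\eta^{|s|}$ we have $u\in j_{E_{s,t}}(X)$ iff $u\in X$, i.e.\ $\mu_{s,t}$ is the principal ultrafilter at $u_{s,t}$. A tower of principal measures cannot give a weak homogeneity system projecting to a non-$\utilde{\Sigma}^1_1$ set; it just picks out the single branch $\bigcup_n u_{x\rest n,y\rest n}$. The entire content of weak homogeneity is that the measures concentrate on unbounded families of candidate branches, precisely so that new branches can be ``seen'' after forcing.

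Third --- and you say this yourself --- you do not supply the coherence of the family $\{E_{s,t}\}$ or the projective-system compatibility of the $\mu_{s,t}$, and you observe that in Woodin's proof this is where the stationary tower enters. But avoiding the stationary tower is exactly the point of the present paper and of the result it cites. The paper does not reprove Theorem~\ref{thm:Woodin}: it appeals to Steel's stationary-tower-free argument (\cite[Theorem~1.2]{stfdmt}), which proceeds quite differently --- via a countable elementary $\pi:M\to V_\eta$ with $\delta$ and the absolutely complementing trees in $\rg(\pi)$, Martin--Steel iterability of $(M,\bar\delta)$ for extender-nice trees, and Neeman genericity iterations --- rather than by directly manufacturing the homogeneity measures from extenders that reflect $T$. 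Your sketch is essentially the naive approach whose failure motivates both the stationary-tower proof and Steel's replacement for it.
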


Woodin's original proof of Theorem \ref{thm:Woodin} uses the stationary tower. 
However, Steel gave a stationary tower free proof; see \cite[Theorem 
1.2]{stfdmt}.
So we are free to use Theorem \ref{thm:Woodin}, and hence Theorem \ref{thm:MSW}.

\begin{rem}\label{rem:bug}
In the second author's proof just mentioned, he considers
a countable transitive $M$ and elementary
$\pi:M\to V_\eta$, with $\rg(\pi)$ including the given Woodin cardinal 
$\delta$ and a pair of $(\delta+1)$-absolutely 
complementing trees.
By Martin-Steel \cite{itertrees}, the structure $(M,\bar{\delta})$ is  $(\om+1)$-iterable 
for extender-nice trees,
where $\pi(\bar{\delta})=\delta$
(see \ref{dfn:nice}),
and this is
 enough to use Neeman genericity iterations on $(M,\bar{\delta})$,
 which come up in the proof.
For the stationary tower free proof of \ref{thm:Hom_scale} we will want an $M$ similar to this,
but also such that $M$ is iterable for certain uncountable length nice trees
(those based on an a certain interval
$(\bar{\theta},\bar{\delta})$).
In \cite[Remark 3.2]{steel_dmt} there is a sketch of another alleged proof of Theorem \ref{thm:Woodin}, also stationary tower free,
in which this sort of iterability is claimed.
However, the first author noticed a gap in that argument, for which a repair is not obvious.
Let us first explain what the issue there is, so as to make it clear why we don't just find our $M$ 
as in that remark.

In the notation of \cite[Remark 3.2]{steel_dmt}, in its second paragraph, the idea seems to be as follows.
We have trees $T$ and $U$,\footnote{Actually only $T$ is mentioned in the second paragraph, but in the end, one wants a countable iterable structure with a pair of absolutely complementing trees,
in order to argue as in the first paragraph of \cite[Remark 3.2]{steel_dmt}.}
chosen as substructures of a pair of $(\delta+1)$-absolutely complementing trees,
such that $T,U$ are on $\om\cross\delta$ and project to complements in generic extensions by the extender algebra $\BB_\delta$ at $\delta$.
In  the last sentence of \cite[Remark 3.2]{steel_dmt} there is the suggestion to ``in the extender algebra,
only use identities induced by $(T,U)$-strong extenders'', in order to ensure
the relevant iterability.
But this means that we are now considering some other extender algebra
$\BB_{T,U,\delta}$, which moreover depends on $T$ and $U$. Thus, it is possible that $\BB_\delta\neq\BB_{T,U,\delta}$,
so it seems possible that $T,U$ do not project to complements in extensions by $\BB_{T,U,\delta}$,
which breaks the argument. (A further issue is that after taking $\delta$ least such that
$\delta$ is Woodin in $L(V_\delta,T\rest\delta,U\rest\delta)$, it is not clear that $T\rest\delta,U\rest\delta$
project to complements in extensions by the extender algebra of that model.)
\end{rem}

The key lemma toward the original proof of Theorem \ref{thm:Hom_scale} is the following:
\begin{tm}[Steel]\label{thm:steel_UB_scale}
 Let $\delta$ be  Woodin  and $T$ be a $(\delta+1)$-homogeneous tree.
 Then there is a $\delta$-universally Baire scale on $\RR\cut p[T]$.
\end{tm}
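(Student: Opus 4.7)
The plan is to construct the scale on $\RR\setminus p[T]$ via the Martin--Solovay tree and to establish its $\delta$-universal Baireness by an iterability argument that replaces the stationary tower step of \cite[Theorem 4.3]{steel_dmt}.

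I would first fix a $(\delta+1)$-homogeneity system $\vec\mu=\langle\mu_s:s\in\om^{<\om}\rangle$ for $T$ and form the Martin--Solovay tree $U=\ms(T,\vec\mu)$ in the standard way, so that $p[U]=\RR\setminus p[T]$ in $V$, with leftmost-branch norms giving a scale $\vec\phi$ on $\RR\setminus p[T]$. The theorem then reduces to showing that $(T,U)$ is $\delta$-absolutely complementing. One direction, $p[T]\cap p[U]=\emptyset$ in $V[g]$, holds by tree absoluteness, so the real task is: for every $\PP\in V_\delta$, every $\PP$-generic $g$ over $V$, and every $x\in\RR^{V[g]}\setminus p[T]^{V[g]}$, to produce in $V[g]$ a branch of $U$ through $x$.

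To achieve this I would take a countable elementary $\pi:M\to V_\eta$ with $\delta,T,U,\vec\mu\in\rg(\pi)$, with preimages $\bar\delta,\bar T,\bar U,\vec{\bar\mu}$. Given $x\in V[g]$ with $x\notin p[T]^{V[g]}$, I would run a Neeman-style genericity iteration to produce $j:M\to M^*$ so that $x$ is $j(\BB^M_{\bar\delta})$-generic over $M^*$. By $\pi$-elementarity, $M\models$ ``$\bar T$ and $\bar U$ project to complements in every small generic extension'', hence the same is true of $j(\bar T),j(\bar U)$ in $M^*$, so $M^*[x]$ contains a branch of $j(\bar U)$ through $x$; absoluteness of well-foundedness then propagates this to a branch of $U$ through $x$ in $V[g]$, via the canonical correspondence between branches of a Martin--Solovay tree and well-founded rank functions on sections of the base tree. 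The scale norms are treated in parallel.

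The main obstacle will be arranging the iterability of $M$ that the genericity iteration requires: because $U$ is built from measures living on sets of cardinality approaching $\delta$, the iteration trees on $M$ are based on a nontrivial interval $(\bar\theta,\bar\delta)$ rather than on $M|\bar\delta$ alone, and can be of uncountable length, so the $(\om+1)$-iterability of \cite{itertrees} for extender-nice countable trees does not by itself suffice. As Remark \ref{rem:bug} explains, the natural patch of passing to a sub-extender-algebra destroys the very complementation of $\bar T,\bar U$ on which the argument rests. Accordingly, I expect the bulk of the work to lie in setting up $M$, together with background extenders in $V$, so that the induced strategy is total on uncountable-length nice trees based on $(\bar\theta,\bar\delta)$ while remaining compatible with the full extender algebra $\BB^M_{\bar\delta}$ over which $\bar T,\bar U$ absolutely complement.
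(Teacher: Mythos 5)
Your plan is essentially the stationary-tower proof of this theorem (cf.\ \cite[Theorem 4.3]{steel_dmt}) with a promissory note in place of the iterability argument, and the promissory note is precisely the step that remains open. This paper explicitly states that we do not see how to prove Theorem~\ref{thm:steel_UB_scale} itself without the stationary tower; what it instead proves stationary-tower-free is the weaker Corollary~\ref{cor:local}, which needs \emph{four} Woodin cardinals $\delta_0<\delta_1<\delta_2<\delta_3$. The extra Woodins are not cosmetic: in Lemma~\ref{lem:capturing_mouse}, the trees $S,T$ carried by the capturing mouse are Martin--Solovay trees built from $(\delta_2+1)$-complete weak homogeneity systems, which are obtained by applying Theorem~\ref{thm:Woodin} at the \emph{next} Woodin $\delta_3$. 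That completeness is exactly what makes Claim~\ref{clm:S,T_fixed} (iteration trees on $V$ based on $(\eta,\delta_2)$ fix $S,T$) go through, and that invariance -- together with the device of $W$-nice trees, the minimality of $\delta_\eta$, and the Zipper Lemma -- is what yields uniqueness of cofinal wellfounded branches (Claim~\ref{clm:unique_cof_wfd_branch}) and hence the total strategy $\Sigma$ on uncountable nice trees. With a single Woodin $\delta$ you have no analogous room to reflect, so the last paragraph of your proposal, where you say you ``expect the bulk of the work to lie in setting up $M$,'' is exactly where the argument stops being a proof.

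Two further points. First, your reduction ``the theorem then reduces to showing that $(T,U)$ is $\delta$-absolutely complementing'' understates what is required: the conclusion is a $\delta$-universally Baire \emph{scale}, so the norm trees themselves need to come with $\delta$-absolutely complementing companions, not merely $U$; you gesture at this with ``the scale norms are treated in parallel,'' but this is nontrivial and is handled very differently in the paper. Second, you cite Remark~\ref{rem:bug} as the obstacle but do not address it; the remark's point is that the obvious route to iterability (restricting to $(T,U)$-strong extenders) changes the extender algebra to some $\BB_{T,U,\delta}\neq\BB_\delta$, and there is no reason $T,U$ should still project to complements in $\BB_{T,U,\delta}$-generic extensions -- which would break your step ``so $M^*[x]$ contains a branch of $j(\bar U)$ through $x$.'' The paper's actual scale construction sidesteps this by not using a fixed genericity iteration at all: instead it defines the prewellorders via terminal inflations and comparison inflations of countable $<^M$-nice trees (Claims~\ref{lem:gen_inf}--\ref{clm:compare_lx,ly}), with the leftmost branch of $T^{M^\Xx_\infty}_x$ stabilizing along the inflation order. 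That is a genuinely different decomposition from ``build one genericity iteration and pull a branch back,'' and it is what buys the result at the price of the extra Woodins.
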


Using Theorems \ref{thm:steel_UB_scale}, \ref{thm:Woodin} and \ref{thm:MSW},  Theorem \ref{thm:Hom_scale} is established as follows:
\begin{proof}[Proof of Theorem \ref{thm:Hom_scale}]
Let $A\in\Hom_{<\lambda}$. By 
Steel-Woodin \cite[Theorem 2.7]{steel_dmt},
\[ \Hom_{<\lambda}=\Hom_{\gamma_0}\text{ for some }\gamma_0<\lambda.\]
Let $\gamma_0<\delta_0<\delta_1<\delta_2<\delta_3$ with the $\delta_i$'s Woodin. Then by \ref{thm:MSW}, $\RR\cut A$ is 
$(\delta_3+1)$-universally Baire. So by \ref{thm:Woodin},
$\RR\cut A$ is ${<\delta_3}$-weakly homogeneously Suslin,
and hence $\RR\cut A=p[A']$ for some ${<\delta_3}$-homogeneously Suslin set $A'\sub\RR^2$. So by \ref{thm:steel_UB_scale}, there is a scale $B$ on $A$ which is 
$\delta_2$-universally Baire. By \ref{thm:Woodin},
$B$ is ${<\delta_1}$-weakly homogeneously Suslin, so by Martin and Steel \cite{projdet}, $B$ is $<\delta_0$-homogeneously Suslin,
so $B\in\Hom_{\gamma_0}=\Hom_{<\lambda}$.\end{proof}

Now we do not see how to prove Theorem \ref{thm:steel_UB_scale} without appealing to the stationary tower. But
note that the proof above shows:
`\begin{cor}[Steel]\label{cor:local}
Let $\delta_0<\delta_1<\delta_2<\delta_3$ be Woodin cardinals and let $A\sub\RR$ be  
$(\delta_3+1)$-universally Baire. Then there is a scale on $A$ which is ${<\delta_0}$-homogeneously Suslin.
\end{cor}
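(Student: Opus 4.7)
The plan is to reproduce the chain of deductions in the proof of Theorem \ref{thm:Hom_scale} just given, observing that the hypothesis $A\in\Hom_{<\lambda}$ was used there only as input to Theorem \ref{thm:MSW} in order to conclude that $\RR\cut A$ is $(\delta_3+1)$-universally Baire. Since the hypothesis of the corollary directly supplies that $A$, and hence $\RR\cut A$ (universal Baireness being closed under complements), is $(\delta_3+1)$-UB, we can simply pick up the existing argument at that point.

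Concretely: by Theorem \ref{thm:Woodin} applied at $\delta_3$, $\RR\cut A$ is ${<\delta_3}$-weakly homogeneously Suslin; so $\RR\cut A = p[A']$ for some ${<\delta_3}$-homogeneously Suslin $A'\sub\RR^2$, which is in particular $(\delta_2+1)$-homogeneously Suslin. Applying Theorem \ref{thm:steel_UB_scale} at $\delta_2$ to a homogeneity tree for $A'$ (after absorbing the extra real coordinate into the tree, so as to view it as a homogeneity tree for $p[A']\sub\RR$) produces a scale $B$ on $A = \RR\cut p[A']$ which is $\delta_2$-universally Baire. Theorem \ref{thm:Woodin} at $\delta_1$ then gives that $B$ is ${<\delta_1}$-weakly homogeneously Suslin, and the Martin--Steel reduction invoked at the end of the proof of Theorem \ref{thm:Hom_scale} (from \cite{projdet}) upgrades this to ${<\delta_0}$-homogeneously Suslin, as required.

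Since this is essentially a verbatim repackaging of the argument already displayed, there is no genuine obstacle. The content of the corollary is precisely the observation that the proof of Theorem \ref{thm:Hom_scale} is local, in the following sense: it uses only four Woodin cardinals above the universally Baire input level, and the scale it produces has homogeneity grade bounded strictly below the lowest of them. Thus no appeal to the global hypothesis of a limit of Woodins, nor to $\Hom_{<\lambda}=\Hom_{\gamma_0}$, is needed for this local version.
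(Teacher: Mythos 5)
Your derivation is correct, and it is exactly the observation the paper itself makes when introducing Corollary \ref{cor:local}: the proof of Theorem \ref{thm:Hom_scale} uses the hypothesis $A\in\Hom_{<\lambda}$ only to conclude that $\RR\cut A$ is $(\delta_3+1)$-universally Baire, so taking that as the hypothesis gives the local statement. But this is not the paper's proof of the corollary, and the distinction is the point of the paper. Your route passes through Theorem \ref{thm:steel_UB_scale}, and the authors explicitly state, immediately before the corollary, that they do not see how to prove that theorem without the stationary tower; the corollary is isolated precisely because it needs an independent, stationary-tower-free proof, not because it falls out of rearranging the results stated before it. The paper's actual proof is Section 2 in its entirety, and it is a genuinely different construction: Lemma \ref{lem:capturing_mouse} builds a countable coarse premouse $(M,\delta^M)$ carrying trees $S^M,T^M$ and a $<\delta_2$-homogeneously Suslin iteration strategy $\Sigma$ with strong hull condensation, such that $\Sigma$-iterates of $M$ capture the universally Baire representation of $A$ (the Martin--Solovay trees, the Martin--Steel iterability theorem, and a Wind{\ss}us-style homogeneity argument replace the stationary tower in that construction); a scale on $A$ is then built directly, by tracking the leftmost branch of $T^{M^\Xx_\infty}_x$ across terminal inflations of trees via $\Sigma$ and stabilizing via comparison inflation, which produces norms that are $\Delta^1_2(\Sigma\rest\HC)$ and hence $<\delta_0$-homogeneously Suslin. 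Your argument buys brevity at the cost of the stationary tower; the paper's buys the removal of that dependence, which is its entire purpose.
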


Clearly it suffices to give a stationary tower free proof of this corollary; this is what we will do.

\begin{rem}
 The first author noticed the implicit dependence on the proof of \cite{stfdmt}
 on the stationary tower mentioned above, and also the issue mentioned in Remark \ref{rem:bug}, and a fix to that issue (the fix is as done in the proof of Lemma \ref{lem:capturing_mouse}) and mentioned these things to the second author during the M\"unster inner model theory conference in 2017. The two authors then found the stationary tower free scale construction presented here, during that conference.
\end{rem}

\section{Stationary tower free proof of Corollary \ref{cor:local}}

\begin{lem}\label{lem:DC_in_L(V_delta,A)}
 Assume ZF, $V=L(V_\delta,A)$ for some class of ordinals $A$, $V_\delta\models$ ZFC  and $\delta$ is Woodin. Then DC  holds and  ${<\delta}$-choice holds.
\end{lem}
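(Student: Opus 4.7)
The plan is to leverage the naming structure of $V=L(V_\delta,A)$: for each ordinal $\eta$ there is a canonical surjection $\pi_\eta\colon V_\delta\times\eta^{<\omega}\times\omega\twoheadrightarrow L_\eta(V_\delta,A)$ in $V$ (coming from the $\Def$-construction of the $L(V_\delta,A)$-hierarchy), so every set in $V$ has a ``name'' $(v,\vec\gamma,\lceil\varphi\rceil)$ with $v\in V_\delta$. Since $V_\delta\models\AC$, each $V_\alpha$ for $\alpha<\delta$ is well-orderable in $V_\delta$, hence in $V$. And $\delta$ is regular in $V$ (from $V_\delta\models\ZFC$ together with $\delta$ being Woodin), so ${<\delta}$-sequences of ordinals below $\delta$ are bounded.

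For ${<\delta}$-choice: given $(X_i)_{i\in I}$ with $|I|<\delta$ and $X_i\neq\emptyset$, fix $\eta$ with all $X_i\in L_\eta(V_\delta,A)$. For each $i$ let $\alpha_i<\delta$ be the least $\alpha$ such that some element of $X_i$ has a $\pi_\eta$-name with $V_\delta$-parameter in $V_\alpha$. By regularity of $\delta$, $\alpha^*:=\sup_i\alpha_i<\delta$. By $\AC$ in $V_\delta$ fix $W\in V_\delta$ well-ordering $V_{\alpha^*}$; $W$ induces a lexicographic well-ordering $\prec$ of $V_{\alpha^*}\times\eta^{<\omega}\times\omega$ in $V$. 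Then $f(i):=\pi_\eta(\prec\text{-least name of an element of }X_i)$ is the required choice function.

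For $\DC$: given $(X,R)\in V$ with $R$ total on $X$ and $X,R\in L_\eta(V_\delta,A)$, I use a similar naming argument recursively. For $x\in X$ define $g(x)<\delta$ to be the least $\alpha$ such that $S_x=\{y:R(x,y)\}$ has some element with $\pi_\eta$-name parameter in $V_\alpha$. For any $\omega$-sequence of $g$-values, the supremum is $<\delta$ by regularity. The plan is to fix $\alpha^*<\delta$ and $W\in V_\delta$ well-ordering $V_{\alpha^*}$, and recursively set $x_{n+1}:=\pi_\eta$ of the $\prec$-least name in $S_{x_n}$ with parameter in $V_{\alpha^*}$, where $\prec$ is the $W$-induced lex order.

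The main obstacle is the self-referential dependence between the bound $\alpha^*$ and the resulting sequence. I would resolve this by an iterative stabilization: start with $\alpha_0<\delta$ (say $\alpha_0:=g(x_0)$), run the recursion until it first fails (if at stage $n_k$ the required parameter is not in $V_{\alpha_k}$), enlarge to $\alpha_{k+1}:=g(x_{n_k})<\delta$, pick $W_{k+1}\in V_\delta$ well-ordering $V_{\alpha_{k+1}}$ (which exists by $\AC$ in $V_\delta$ applied to the bounded family $\{\text{well-orderings of }V_\alpha:\alpha\le\alpha_{k+1}\}$ inside $V_\delta$), and continue. At most countably many iterations are needed, and $\alpha^*:=\sup_k\alpha_k<\delta$ by regularity, yielding a valid $\DC$-sequence in $V$.
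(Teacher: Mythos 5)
Your treatment of ${<\delta}$-choice is correct and is essentially the paper's argument (parametrize each nonempty $F(\alpha)$ by the least $V_\delta$-parameter needed to name an element, bound these parameters below $\delta$ by regularity, and then use a single wellorder of $V_{\xi}$ from $V_\delta$ to make the choices canonically). The paper phrases this with $\OD_x$ definitions rather than an explicit $\pi_\eta$-name surjection, but that is the same idea.

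The $\DC$ half, however, has a genuine gap, and it is exactly where you drop the hypothesis that $\delta$ is Woodin. Your ``iterative stabilization'' has two problems that feed into each other. First, the recursion that produces the sequence $\langle \alpha_k, W_k, x_0^{(k)},\dots,x_{n_k}^{(k)}\rangle_{k<\omega}$ is itself a dependent choice of length $\omega$: at each step you must \emph{choose} a wellorder $W_{k+1}\in V_\delta$ of $V_{\alpha_{k+1}}$, and the data driving the recursion ($g$, $R$, the $x^{(k)}_n$'s) live in $V$, not in $V_\delta$, so $\AC$ or $\DC$ inside $V_\delta$ does not let you carry out the recursion uniformly. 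Since $\DC$ already implies $\AC_\omega$, you cannot help yourself to $\omega$ independent choices of the $W_k$'s while proving $\DC$. Second, even if the $W_k$'s could be produced, nothing forces $W_{k+1}$ to end-extend $W_k$, so the partial sequence built at stage $k+1$ can be entirely different from the one at stage $k$; there is no monotonicity in the ``failure stage'' $n_k$, and hence no reason the procedure stabilizes in $\leq\omega$ steps or that the limit object is a branch at all. (If you additionally require $W_{k+1}\rest V_{\alpha_k}=W_k$ you fix the monotonicity, but the need to recursively choose the $W_k$'s remains.)

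The paper avoids this circularity entirely by using the Woodinness: after reducing to $X=V_\delta$, fix $\kappa<\delta$ which is $({<\delta},R)$-reflecting. Then $\bar R=R\cap V_\kappa$ is still ``total'' (for every $\sigma\in\bar R$ there is $x\in V_\kappa$ with $\sigma\conc\langle x\rangle\in\bar R$) by reflection, and $\bar R\in V_\delta\models\ZFC$, so one gets a branch through $\bar R$ (hence through $R$) using full $\AC$ inside $V_\delta$. In other words, the reflection property bounds the ``parameter level'' $\kappa$ you need \emph{before} you start building the branch, which is precisely the bound your iterative scheme is trying, and failing, to manufacture. Without some such reflection I do not see how to bound the levels, and indeed your argument never invokes the Woodinness of $\delta$ for $\DC$ at all, which is a strong sign something essential is missing.
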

\begin{proof}
Consider DC. Let $X$ be a set and let $R$ be a non-empty  set of finite tuples such that for all $\sigma\in R$
there is $x\in X$ such that $\sigma\conc\left<x\right>\in R$. We need to show that an infinite branch through $R$ exists.
 Because $V=L(V_\delta,A)$, a standard
 calculation shows that we may assume $X=V_\delta$. Since $\delta$ is Woodin,
 we can fix $\kappa$ which is $({<\delta},R)$-reflecting. Let $\bar{R}=R\cap V_\kappa$.
 It is then easy to see that $\bar{R}\neq\emptyset$,
 and that for every $\sigma\in\bar{R}$,
 there is $x\in V_\kappa$ such that $\sigma\conc\left<x\right>\in\bar{R}$.
 But $\bar{R}\in V_\delta\models$ ZFC,
 and therefore there is an infinite branch through $\bar{R}$, which suffices.
 
For ${<\delta}$-choice,
let $\gamma<\delta$ and let $F:\gamma\to V$ be some function with $F(\alpha)\neq\emptyset$ for all $\alpha<\gamma$. Then for each $\alpha<\gamma$,
there is some least $\xi_\alpha<\delta$ such that there is some $x\in V_{\xi_\alpha}$
such that $F(\alpha)$ has an element which is $\OD_x$. Let $\xi=\sup_{\alpha<\gamma}\xi_\alpha$;
so $\xi<\delta$. But then by choice in $V_\delta$, we can find a function $\alpha\mapsto x_\alpha$, with domain $\gamma$, and such that
for each $\alpha<\gamma$, $x_\alpha\in V_\xi$
and there is an element of $F(\alpha)$
which is $\OD_{x_\alpha}$.
This easily suffices.
\end{proof}
\begin{dfn}\label{dfn:nice}
Recall that a (short) extender $E$ is \emph{nice} iff $\strength(E)=\nu_E$ (where $\nu_E$ denotes the strict supremum of the generators of $E$) is an inaccessible 
cardinal.

We say that an iteration tree $\Tt$ is \emph{extender-nice} iff for every $\alpha+1<\lh(\Tt)$,
$M^\Tt_\alpha\sats$``$E^\Tt_\alpha$ is nice''. Recall that $\Tt$ is \emph{nice} if it is
normal and extender-nice.

A (partial) $\kappa$-iteration strategy is \emph{decent} if it applies  to all extender-nice trees 
of length $<\kappa$.
\end{dfn}

\begin{lem}\label{lem:capturing_mouse}
 Let $\delta_2<\delta_3$ be Woodins and let $U,W$ be $(\delta_3+1)$-absolutely complementing 
trees on $\om\cross\lambda$ for some $\lambda\in\OR$. Then there is a countable 
coarse premouse 
$(M,\delta^M)$ and $S^M,T^M\in M$ and $\Sigma$ such that:
\begin{enumerate}
 \item $(M,\delta^M)$ satisfies the conditions
 in the first bullet point of \cite[Definition 3.1]{iter_for_stacks}\footnote{For the reader's convenience, that is, 
then $\delta^M\leq\OR^M=\rank(M)$, $\delta^M$ and $\OR^M$ are limit ordinals, $\card^M(V_\alpha^M)<\delta^M$ for all $\alpha<\delta^M$, $\cof^M(\delta^M)$ is not measurable in $M$, $M$ satisfies $\Sigma_0$-comprehension and is rudimentarily closed,
and $M$ satisfies $\lambda$-choice for all $\lambda<\delta_\eta$.}, and satisfies AC,
 \item\label{item:Sigma_is_strategy} $\Sigma$ is a decent $\delta_2$-iteration strategy for $M$, 
with strong hull condensation,
 \item $\Sigma\rest\HC$ is $<\delta_2$-homogeneously Suslin,
 \item\label{item:M_has_trees} $M\sats$``$\delta^M$ is Woodin and $S^M,T^M$ are 
$(\delta^M+1)$-absolutely projecting'',
 \item\label{item:capturing_trees} For any $\Sigma$-iterate $P$ of $M$, via a tree $\Tt$ of length 
$\kappa+1<\delta_2$, and 
for any $V$-generic $G\sub\Coll(\om,\kappa)$, we have
 \[ p[S^P]^{V[G]}\sub p[U]^{V[G]}\text{ and }p[T^P]^{V[G]}\sub p[W]^{V[G]}.\]
 Hence, $V[G]\sats$``if $x\in\RR$ is $P$-generic for $\Coll(\om,\delta^P)$ then $x\in p[S^P]$ iff 
$x\in p[U]$
 and $x\in p[T^P]$ iff $x\in p[W]$''.
 \end{enumerate}
\end{lem}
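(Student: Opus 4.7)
The plan is to take $M$ to be the transitive collapse of a countable elementary submodel of some $V_\eta$ (for a regular $\eta>\delta_3$ large enough that $V_\eta$ witnesses ``$\delta_3$ is Woodin and $U,W$ are $(\delta_3+1)$-absolutely complementing''), and to define $\Sigma$ by copying via the uncollapse map. Concretely, I would take a countable $X\prec V_\eta$ containing $\delta_2,\delta_3,U,W$, let $\pi:M\to V_\eta$ be the inverse Mostowski collapse, and set $\delta^M=\pi^{-1}(\delta_3)$, $S^M=\pi^{-1}(U)$, $T^M=\pi^{-1}(W)$. Items (1) and (4) of the lemma are then immediate from elementarity of $\pi$.

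To obtain $\Sigma$, I would use Martin--Steel-style copying: every extender-nice tree $\Tt$ on $M$ lifts via $\pi$ to a copy tree on $V$, and at each limit stage one picks the branch whose direct-limit copy map is realized in $V$. Strong hull condensation (part of item (2)) then follows from the naturality of copy maps. Extending the construction to uncountable-length trees (up to length $<\delta_2$) will require reflection using the Woodins $\delta_2,\delta_3$ to secure the needed $V$-side iterability; this is a careful but standard check. For the capturing in item (5), the copy map $\pi_P:P\to V$ of a $\Sigma$-iterate sends $S^P,T^P$ to subtrees of $U,W$, immediately yielding $p[S^P]^{V[G]}\sub p[U]^{V[G]}$ and $p[T^P]^{V[G]}\sub p[W]^{V[G]}$. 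The biconditional then follows by combining absolute complementarity inside $P$-generic extensions (pulled back from item (4)) with that of $(U,W)$ in $V[G]$: if $x$ is $P$-generic for $\Coll(\om,\delta^P)$, then $x\in p[S^P]^{V[G]}\shortiff x\notin p[T^P]^{V[G]}$ (from (4) applied in $P[x]$), while $x\in p[U]^{V[G]}\shortiff x\notin p[W]^{V[G]}$ (by absolute complementarity of $U,W$), and the one-sided containments tie the two together.

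The hard part will be item (3), that $\Sigma\rest\HC$ is $<\delta_2$-homogeneously Suslin. The key will be to give a definition of $\Sigma\rest\HC$ referring only to $(U,W,M,S^M,T^M)$ and not to the contingent choice of $\pi$: for a countable tree $\Tt$, I would characterize $\Sigma(\Tt)=b$ as the unique cofinal branch $b$ along which the images $S^b,T^b$ of $S^M,T^M$ satisfy $p[S^b]\sub p[U]$ and $p[T^b]\sub p[W]$ in collapse extensions. Uniqueness comes from the absolute complementarity of $(S^b,T^b)$ in $M^\Tt_b$-generic extensions together with that of $(U,W)$: two distinct cofinal branches each satisfying the containment would yield incompatible projections in a common extension. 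Such a characterization makes $\Sigma\rest\HC$ projective in $(U,W)$, hence $(\delta_3+1)$-universally Baire; then Theorem \ref{thm:Woodin} applied with Woodin $\delta_3$ gives that $\Sigma\rest\HC$ is $<\delta_3$-weakly homogeneously Suslin, and Martin and Steel \cite{projdet} (using the Woodin $\delta_2$) upgrades this to $<\delta_2$-homogeneously Suslin.

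The subtlety that makes this lemma ``the fix'' to the gap in Remark \ref{rem:bug} is the weakening of the capturing requirement in item (5) from exact complementation to one-sided containment. The naive ``pick the branch via an extender-algebra genericity iteration'' approach fails because the modified extender algebra needed for good $(T,U)$-strong extenders may not preserve the projection-to-complements property of $(U,W)$ under its generics. The weakened capturing in item (5) is precisely what makes the $(U,W)$-guided branch selection above well-defined and robust under further iteration, and is also exactly what is needed downstream in the corollary's main argument.
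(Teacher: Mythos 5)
Your high-level template (countable hull of a rank initial segment, lift the strategy via the uncollapse map, argue capturing via realizability maps) is in the right family, but there are substantive gaps relative to what the paper actually needs, and the paper's proof departs from yours at exactly those points.

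\textbf{The trees $S^M,T^M$ cannot simply be $\pi^{-1}(U),\pi^{-1}(W)$.} For item (5) you need, given an iterate $P$ of $M$ via $\Tt$, a realizability map $\sigma:P\to Q$ into a model that still ``knows'' $p[U]$ and $p[W]$. If you copy $\Tt$ to $\pi\Tt$ on $V$, the final $V$-side model is an iterate $V'$ of $V$, and the copy map sends $S^P$ to $i^{\pi\Tt}(U)$, not to $U$. There is no general reason that $p\bigl[i^{\pi\Tt}(U)\bigr]\sub p[U]$. The paper handles this by first invoking Theorem \ref{thm:Woodin} to get $(\delta_2+1)$-complete weak homogeneity systems for $p[U]$ and its complement, and then taking $S,T$ to be the associated Martin--Solovay trees; the crucial point (the paper's Claim \ref{clm:S,T_fixed}, a transcription of \cite[Lemmas 4.5--4.6]{steel_dmt}) is that $S,T$ are \emph{fixed} by the iteration embeddings of trees on $V$ based on $(\eta,\delta_2)$, so $\sigma(S^P,T^P)=(S,T)$ and the containments follow from $p[S]=p[U]$, $p[T]=p[W]$. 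Your proposal lacks this ingredient, and the ``$\pi_P$ sends $S^P,T^P$ to subtrees of $U,W$'' step does not go through.

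\textbf{Branch uniqueness is not established.} Both your definability characterization of $\Sigma\rest\HC$ and the very well-definedness of $\Sigma$ on trees of length $<\delta_2$ require that, at limit stages of the $V$-side copy tree, there is a \emph{unique} cofinal wellfounded branch. Your uniqueness argument (``two distinct cofinal branches each satisfying the containment would yield incompatible projections'') is not supported: two branches could both carry trees projecting into $p[U]$ and $p[W]$ without any immediate contradiction. The paper gets uniqueness by a quite specific device --- exactly the fix to the gap discussed in Remark \ref{rem:bug}: fix a wellordering $W$ of $V_{\delta_2}$, restrict to $W$-nice trees (extenders cohering $W$), and define $\delta_\eta$ to be the least $\delta>\eta$ that is Woodin in $L(V_\delta, W\rest V_\delta, S, T)$; then for a $W$-nice tree based on $(\eta,\xi)$ with $\xi<\delta_\eta$, two distinct cofinal wellfounded branches $b,c$ would agree on $L(V_{\delta(\Tt)}, W\rest V_{\delta(\Tt)}, S, T)$, so the Zipper Lemma makes $\delta(\Tt)$ Woodin there, contradicting minimality of $\delta_\eta$. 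Note that placing $S$ \emph{and} $T$ together in the parameter is the point; it is what sidesteps the problem with the naive ``$(T,U)$-strong extender algebra'' idea flagged in Remark \ref{rem:bug}. Your diagnosis that the ``fix'' is the weakening to one-sided containment in item (5) misses this: the real mechanism is the $W$-nice trees and the choice of $\delta_\eta$.

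\textbf{Uncountable-length trees are the hard part, not a routine check.} A decent $\delta_2$-iteration strategy must act on trees of length up to $\delta_2$. The paper proves this in stages: $V$-side iterability for $W$-nice trees based on $(\eta,\xi)$, $\xi<\delta_\eta$ (Claims \ref{clm:unique_cof_wfd_branch}, \ref{clm:V_iterable}, including a continuous-illfoundedness argument for existence of branches of uncountable trees in a collapse extension), then taking $M_\eta$ and $\Sigma_\eta$ to be an $(\eta+1)$-strategy for each $\eta<\delta_2$, and finally gluing over a cofinal $X\sub\delta_2$ on which $M_\eta$ and $\Sigma_\eta\rest\HC$ stabilize, checking coherence via a hull-of-iteration-trees argument. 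Dismissing this as ``careful but standard'' elides most of the proof.

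\textbf{On item (3).} Your route --- a projective-in-$(U,W)$ definition, hence universally Baire, then Theorem \ref{thm:Woodin} and Martin--Steel --- is a genuinely different path from the paper's direct Wind{\ss}us-style argument that each $\Sigma_\eta$ restricted to countable trees is $\eta$-homogeneously Suslin. Your route could in principle be attractive (it avoids reproving Wind{\ss}us in the long-tree setting), but it stands or falls with the unproved uniqueness characterization, and you should also be careful about which Woodin cardinals are available for the closure of the UB sets under projection --- the lemma's hypothesis gives you only $\delta_2<\delta_3$.

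In short: the skeleton is right, but you are missing (i) the Martin--Solovay trees, which are needed for item (5) and for the branch-existence argument; and (ii) the $W$-nice/$\delta_\eta$/Zipper-Lemma mechanism for branch uniqueness and $V$-side iterability, which is both the technical heart of the proof and the actual content of the fix to Remark \ref{rem:bug}.
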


\begin{proof}[Proof  (stationary tower free)]
Let $U,W$ be a pair of $(\delta_3+1)$-absolutely complementing trees.
By \ref{thm:Woodin} we may fix $(\delta_2+1)$-complete weak homogeneity systems
$\left<\mu_{st}\right>_{s,t\in{^{<\om}\om}}$ and $\left<\mu'_{st}\right>_{s,t\in{^{<\om}\om}}$
which project to $p[U],p[W]=\RR\cut p[U]$ respectively.
Let $\xi\in\OR$ with $\left<\mu_{st}\right>_{x,t},\left<\mu'_{st}\right>_{s,t}\in V_\xi$. Let $T,S$ 
be the Martin-Solovay trees respectively for $\RR\cut p[U]$  and $\RR\cut p[W]$,
up to some strong limit cardinal $\gamma$ with $\cof(\gamma)>\xi$.
 So in particular, $p[S]=p[U]$ and $p[T]=p[W]$.

\begin{clm}\label{clm:S,T_fixed}
Let $\eta<\delta_2$ and $\PP\in V_{\eta+1}$ and $G$ be $(V,\PP)$-generic (possibly $\PP=G=\emptyset$). 
Let $\Tt\in V[G]$ be any iteration tree on $V$ based on the interval $(\eta,\delta_2)$,  of successor length 
$\leq\delta_2+1$  (in particular, $\crit(E^\Tt_\alpha)>\eta$ for all $\alpha+1<\lh(\Tt)$). Then $i^\Tt(S,T)=(S,T)$.
\end{clm}
\begin{proof}
The proof is essentially a direct transcription of the proofs of Lemma 4.5 and Corollary 4.6 of \cite{steel_dmt}, which we leave to the reader. (Since $\Tt$ is above $\eta+1$, there is an equivalent tree $\Tt^+$ on $V[G]$. Moreover, there are weak homogeneity systems $\vec{\mu}^+,\vec{\mu'}^+$ of $V[G]$ which are equivalent to $\vec{\mu}$ and $\vec{\mu'}$, and letting $T^+,S^+$ be the resulting Martin-Solovay trees as computed in $V[G]$, we have $T^+=T$ and $S^+=S$.
 Now the stationary tower embedding  used in \cite{steel_dmt} is replaced by $i^{\Tt^+}$ here, and things work analogously to in \cite{steel_dmt}
because $\Tt^+$ is based on $V_{\delta_2}^{V[G]}$ and has length $\leq\delta_2+1$,
and by  the $(\delta_2+1)$-completeness of the weak homogeneity systems.)
\end{proof}

 Let $W$ be a wellorder of $V_{\delta_2}$ in ordertype $\delta_2$,
 such that for $V_\gamma$ is an initial segment under $W$ for every ordinal $\gamma<\delta_2$.
Given $\eta$ with $2<\eta<\delta_2$, let $\delta_\eta$ be the least $\delta>\eta$ such that $\delta$ is 
Woodin in 
$L(V_\delta,W\rest V_\delta,S,T)$ (so $\delta_\eta\leq\delta_2$).
 Say that an iteration tree $\Tt$ is \emph{$W$-nice}
 if $E^\Tt_\alpha$ is nice and $E^\Tt_\alpha$
 coheres $W$ through $\varrho(E^\Tt_\alpha)$
 for each $\alpha+1<\lh(\Tt)$.
\begin{clm}\label{clm:unique_cof_wfd_branch}
Let $\eta<\delta_2$ and $\eta<\xi<\delta_\eta$. Let $G$ be $(V,\Coll(\om,\eta))$-generic.
Let $\Tt\in V[G]$ be a limit length $W$-nice iteration tree on $V$ which is based on the interval $(\eta,\xi)$ and has length $\leq\eta$. Then:
\begin{enumerate}
\item\label{item:V[G]_sats_exactly_one_cof_wfd_branch}   $V[G]\sats$``$\Tt$ has  exactly one cofinal wellfounded branch''.
\item\label{item:V_sats_exactly_one_cof_wfd_branch} If $\Tt\in V$ then $V\sats$``$\Tt$ has exactly one cofinal wellfounded branch''.
\end{enumerate}

\end{clm}
\begin{proof}
Part \ref{item:V[G]_sats_exactly_one_cof_wfd_branch}:
\begin{sclm}
 $V[G]\sats$``$\Tt$ has at most one cofinal wellfounded branch''.
\end{sclm}
\begin{proof}
Suppose $b,c$ are distinct $\Tt$-cofinal wellfounded branches. Let $M_b=M^\Tt_b$ and $M_c=M^\Tt_c$.
Because $\Tt$ is based on $(\eta,\xi)$ we have $\delta(\Tt)\leq 
i^\Tt_b(\xi)<i^\Tt_b(\delta_\eta)$. Our choice of $\delta_\eta$ therefore gives that
\begin{equation}\label{eqn:not_Woodin} M_b\sats\text{``}L(V_{\delta(\Tt)},W^{M_b}\rest V_{\delta(\Tt)},S^{M_b},T^{M_b})\sats
\text{``}\delta(\Tt)\text{ is not Woodin''.''} \end{equation}
Likewise for $M_c,i^\Tt_c$. But by Claim \ref{clm:S,T_fixed},
\[ (S^{M_b},T^{M_b})=(S,T)=(S^{M_c},T^{M_c}).\]
We have $V'=V_{\delta(\Tt)}^{M_b}=V_{\delta(\Tt)}^{M_c}$,
and because $\Tt$ is $W$-nice,
$W'=W^{M_b}\rest V'=W^{M_c}\rest V'$.
So
\[ L(V',W',S,T)\sub M_b\inter M_c, \]
so by the Zipper Lemma,
$L(V',W',S,T)\sats\text{``}\delta(\Tt)\text{ is Woodin''}$,
contradicting line (\ref{eqn:not_Woodin}).\end{proof}

\begin{sclm}\label{sclm:cofinal_wfd_branch}
 $V[G]\sats$``$\Tt$ has a cofinal wellfounded branch''.
\end{sclm}
\begin{proof} This is an almost standard fact.
It is almost proved in \cite{itertrees},
but not quite; here is the rest of the proof:

It suffices to see that the tree $\Tt^+$ on $V[G]$, with $\Tt^+$ equivalent to $\Tt$, as in the proof of Claim \ref{clm:S,T_fixed}, has a cofinal wellfounded branch in $V[G]$. So suppose otherwise. Work in $V[G]$.
Then $\Tt^+$ is continuously illfounded, 
by \cite[Theorem 5.6]{itertrees},
and since $\Tt$ and hence also $\Tt^+$ are extender-nice trees, hence $2^{\aleph_0}$-closed (in $V$ and $V[G]$ respectively), and since $\eta$ is countable. Moreover, by a small variant of the proof of the same result, for each limit $\lambda<\lh(\Tt^+)$, $\Tt^+\rest\lambda$
is continuously illfounded off $[0,\lambda)^{\Tt^+}$. Fix a sequence $\Pi=\left<\pi_\lambda\right>_{\lambda\in\mathrm{Lim}\cap(\lh(\Tt^+)+1)}$
of functions witnessing the continuous illfoundedness (off $[0,\lambda)^{\Tt^+}$ in case $\lambda<\lh(\Tt^+)$). Let $\sigma:M\to V_\alpha[G]$
be elementary where $\alpha$ is large enough and $(V_\alpha[G],\delta_2)$ is a coarse premouse,
$M$ is countable and transitive,
and $\Tt^+,\Pi\in\rg(\sigma)$. Let $\sigma(\Uu^+)=\Tt^+$. Then by \cite[Theorem 4.3]{itertrees}, or more generally, by \cite{plus_0_trees}
in case $\Tt^+$ is not a plus 2 tree, there
is a $\Uu^+$-maximal branch $b$ which is $\sigma$-realizable. But the continuous illfoundedness, reflected by $\sigma$, clearly gives that $M^{\Uu^+}_b$ is illfounded, a contradiction.
\end{proof}

Part \ref{item:V_sats_exactly_one_cof_wfd_branch}:
This is an immediate consequence of part \ref{item:V[G]_sats_exactly_one_cof_wfd_branch}
and the homogeneity of the collapse.
\end{proof}

Say a partial $\alpha$-iteration strategy is \emph{$W$-nice} if its domain includes all $W$-nice trees
 of length $<\alpha$.

\begin{clm}\label{clm:V_iterable}
Let $\eta<\delta_2$ and $\eta<\xi<\delta_\eta$.
Then $V$ is $W$-nicely $(\eta+1)$-iterable for 
trees 
based on the interval 
$(\eta,\xi)$,
via the strategy  which chooses unique cofinal wellfounded branches.
\end{clm}
\begin{proof}
 By Claim \ref{clm:unique_cof_wfd_branch},
 this does not break down at limit stages.
 But if there is a tree $\Tt$ of length $\lambda+1\leq\eta$ which extends to a putative tree $\Tt'$ of length
$\lambda+2$ with $M^{\Tt'}_{\lambda+1}$ illfounded, then we get a contradiction much as in the proof of Subclaim \ref{sclm:cofinal_wfd_branch} of the proof of Claim \ref{clm:unique_cof_wfd_branch}.
\end{proof}

Now let $\omega_1\leq\eta<\delta_2$. Working in $W=L(V_{\delta_\eta},W\rest V_{\delta_\eta},S,T)$,
let $\theta\gg\gamma$ (recall $S,T$ are on $\om\cross\gamma$) be such that $L_\theta(V_{\delta_\eta},W\rest V_{\delta_\eta},S,T)$
is satisfies the theory specified in the first bullet point of \cite[Definition 3.1]{iter_for_stacks}.\footnote{For the reader's convenience, that is, writing $N=L_\theta(V_{\delta_\eta},S,T)$,
then $\delta_\eta\leq\OR^N=\rank(N)$, $\delta_\eta$ and $\OR^N$ are limit ordinals, $\card^N(V_\alpha^N)<\delta_\eta$ for all $\alpha<\delta_\eta$, $\cof^N(\delta_\eta)$ is not measurable in $N$, $N$ satisfies $\Sigma_0$-comprehension and is rudimentarily closed,
and $N$ satisfies $\lambda$-choice for all $\lambda<\delta_\eta$.}
Let $\pi_\eta:M_\eta\to 
L_\theta(V_{\delta_\eta},W\rest V_{\delta_\eta},S,T)$ be elementary
with $\eta,\delta_\eta,W\rest V_{\delta_\eta},S,T\in\rg(\pi_\eta)$ and $M_\eta$ 
countable. Let $\pi_\eta(\eta^M,\delta^M,W^M,S^M,T^M)=(\eta,\delta_\eta,W\rest V_{\delta_\eta},S,T)$. Let $\xi=\sup\pi_\eta``\delta^M$. Then $\xi<\delta_\eta$ because $\pi_\eta\in W$. 
Let $\Sigma_\eta$ be the above-$(\eta^M+1)$, $W^M$-nice $(\eta+1)$-iteration strategy for $M$
given by lifting to wellfounded trees $\pi\Tt$ on $V$. Note here that all such $\pi\Tt$ are  $W$-nice and based on the 
interval $(\eta,\xi)$, so Claim \ref{clm:V_iterable} applies. The entire sequence $\left<M_\eta,\pi_\eta\right>_{\omega_1\leq\eta<\delta_2}$ is chosen in $V$, where AC holds.

\begin{clm}
The restriction of $\Sigma_\eta$ to countable length trees is $\eta$-homogeneously Suslin.
\end{clm}
\begin{proof}
Use the generalization for the Wind{\ss}us theorem to arbitrary length trees.
Use finite substructures of iteration trees instead of finite initial segments.
It is similar to Martin-Steel \cite[Theorem 5.6]{itertrees},
in the case of length $>\om$. Here are some more details. We need to consider codes for iteration trees, consisting of a pair 
$(w,t)$, where $w\in\WO$ and $t$ codes the tree, of length $|w|$.
Since there is a measurable cardinal $>\eta$, $\WO$ is $\eta$-hom Suslin.
Then fixing $n<\om$, consider the set $W_n$ of pairs $(w,t)$ such that $w\in\WO$ 
and $t$ is a code for a ``pseudo iteration trees'' $\Tt$ on $M$ of length $|w|$
such that $M^{\pi\Tt}_{|w,n|}$ is wellfounded (where $\pi\Tt$ is also a ``pseudo iteration tree''). 
Here letting $\sigma:\om\to|w|$ be the natural bijection,
$|w,n|=\sigma(n)$. And a ``pseudo iteration tree'' may have 
many illfounded models, but otherwise it is like an iteration tree. We consider the model indexed 
at 
$n$ as a direct limit of models indexed on finite trees,
and use the method of Wind{\ss}us' proof to get that the set of such codes for which
$M^{\pi\Tt}_{|w,n|}$ is wellfounded, is $\eta$-hom Suslin (here we also use the fact that the 
$\eta$-hom Suslin sets are closed under intersection, to require that $w\in\WO$). Finally, 
$\eta$-hom Suslin is closed under countable 
intersection, and the desired set $W$ is just $\bigcap_{n<\om}W_n$, so we are done.
\end{proof}

Let $X\sub[\omega_1,\delta_2)$ be cofinal in $\delta_2$ and such that for all $\eta_0,\eta_1\in X$,
we have $M_{\eta_0}=M_{\eta_1}$ and 
$(\eta,\delta,W,S,T)^{M_{\eta_0}}=(\eta,\delta,W,S,T)^{M_{\eta_1}}$ 
and $\Sigma_{\eta_0}\rest\HC=\Sigma_{\eta_1}\rest\HC$.
Let $\Sigma=\bigcup_{\eta\in X}\Sigma_\eta$ and $M=M_\eta$ for $\eta\in X$.
\begin{clm}
$\Sigma$ is a $W^M$-nice $\delta_2$-iteration strategy for $M$, $\Sigma$ has strong hull
condensation, and $\Sigma\rest\HC$ is $<\delta_2$-homogeneously Suslin.
\end{clm}
\begin{proof}
First note that each $\Sigma_\eta$ has strong hull condensation,
because $\Sigma_\eta$ lifts to unique wellfounded branches on $V$.

So it suffices to see that for all $\eta_0,\eta_1\in X$, if $\eta_0<\eta_1$ then 
$\Sigma_{\eta_0}\sub\Sigma_{\eta_1}$. Suppose not and let $\Tt$ on $M$ be according to both $\Sigma_{\eta_0}$ and $\Sigma_{\eta_1}$,
of limit length $\leq\eta_0$,
and such that $\Sigma_{\eta_0}(\Tt)=b\neq c=\Sigma_{\eta_1}(\Tt)$.
 Let $\pi:H\to V_\theta$ be elementary with
 $H$ countable and transitive and everything relevant in $\rg(\pi)$.
 Let $\pi(\bar{\Tt},\bar{b},\bar{c})=(\Tt,b,c)$. So $\bar{\Tt}$ is also on $M$,
 but has countable length, and $\bar{b},\bar{c}$ are $\bar{\Tt}$-cofinal with $\bar{b}\neq\bar{c}$.
 
 \begin{sclm}
 $\bar{\Tt}\conc\bar{b}$ is via $\Sigma_{\eta_0}$ and $\bar{\Tt}\conc\bar{c}$ is via $\Sigma_{\eta_1}$.\end{sclm}
 \begin{proof}Consider $\bar{\Tt}\conc\bar{b}$. The point is that the $\pi_{\eta_0}$-copy $\bar{\Uu}\conc\bar{b}$ of $\bar{\Tt}\conc\bar{b}$ has wellfounded models,
because the $\pi_{\eta_0}$-copy $\Uu\conc b$ of $\Tt\conc b$
has wellfounded models,
and $\bar{\Uu}\conc\bar{b}$ is a hull of $\Uu\conc b$ in a natural manner.

Here are some details regarding
the the last statement. Let
\[ \varphi:(\lh(\bar{\Tt})+1)\to(\lh(\Tt)+1) \]
be  $\varphi=\pi\rest(\lh(\bar{\Tt})+1)$.
Define elementary embeddings
\[ \sigma_\alpha:M^{\bar{\Uu}\conc\bar{b}}_\alpha\to M^{\Uu\conc b}_{\varphi(\alpha)} \]
for $\alpha\leq\lh(\bar{\Tt})$,
by setting $\sigma_0=\id:V\to V$,
noting that $E^{\Uu\conc b}_{\varphi(\alpha)}=\sigma_\alpha(E^{\bar{\Uu}\conc\bar{b}}_\alpha)$ for $\alpha+1\leq\lh(\bar{\Tt})$,
defining $\sigma_{\alpha+1}$ via the Shift Lemma,
and for limit $\alpha$,
defining $\sigma_\alpha$ as the unique map which commutes with the iteration maps and all maps $\sigma_\beta$ for $\beta<^{\bar{\Uu}\conc\bar{b}}\alpha$. We have the same situation regarding $\bar{\Tt}\conc\bar{b}$ and $\Tt\conc b$,
with embeddings
\[ \varrho_\alpha:M^{\bar{\Tt}\conc\bar{b}}_\alpha\to M^{\Tt\conc b}_{\varphi(\alpha)},\]
and moreover,
\[ \varrho_\alpha=\pi\rest M^{\bar{\Tt}\conc\bar{b}}_\alpha.\]
The hull embeddings commute with the iteration embeddings, i.e.
\[ \varrho_\alpha\com i^{\bar{\Tt}\conc\bar{b}}_{\beta\alpha}=i^{\Tt\conc b}_{\varphi(\beta)\varphi(\alpha)}\com\varrho_\beta \]
and
\[ \sigma_\alpha\com i^{\bar{\Uu}\conc\bar{b}}_{\beta\alpha}=i^{\Uu\conc b}_{\varphi(\beta)\varphi(\alpha)}\com\sigma_\beta\]
whenever $\beta\leq^{\bar{\Tt}\conc\bar{b}}\alpha$. And of course the copy maps also commute with the iteration embeddings, i.e.
letting \[\Pi_\alpha:M^{\bar{\Tt}\conc\bar{b}}_\alpha\to M^{\bar{\Uu}\conc\bar{b}}_\alpha \]
(for $\alpha\leq\lh(\bar{\Tt})$) and 
\[\Psi_\gamma:M^{\Tt\conc b}_\gamma\to M^{\Uu\conc b}_\gamma \] (for $\gamma\leq\lh(\Tt)$)  be the copy maps, then
\[ \Pi_\alpha\com i^{\bar{\Tt}\conc\bar{b}}_{\beta\alpha}=i^{\Tt\conc b}_{\varphi(\beta)\varphi(\alpha)}\com\Pi_\beta \]
and
\[ \Psi_\alpha\com i^{\bar{\Uu}\conc\bar{b}}_{\beta\alpha}=i^{\Uu\conc b}_{\varphi(\beta)\varphi(\alpha)}\com\Psi_\beta\]
whenever $\beta\leq^{\bar{\Tt}\conc\bar{b}}\alpha$. It is straightforward to maintain this hypotheses, and to see that the maps $\sigma_\alpha,\sigma_\beta$ agree with one another appropriately that the Shift Lemma applies. We leave the remaining details to the reader.

Since  $\Uu\conc b$ has wellfounded models,
and we have the maps $\sigma_\alpha$,
$\bar{\Uu}\conc\bar{b}$ also has wellfounded models,
and therefore $\bar{\Tt}\conc\bar{b}$ is via $\Sigma_{\eta_0}$, as desired.

The fact that $\bar{\Tt}\conc\bar{c}$ is via $\Sigma_{\eta_1}$ is likewise.
\end{proof}

But  $\bar{\Tt}$ has countable length and
$\Sigma_{\eta_0}\rest\HC=\Sigma_{\eta_1}\rest\HC$. So $\bar{b}=\bar{c}$, contradiction.
\end{proof}

We have established parts \ref{item:Sigma_is_strategy}--\ref{item:M_has_trees} of 
\ref{lem:capturing_mouse},
and the claim below gives part \ref{item:capturing_trees}:
\begin{clm}
Let $\Tt,P,G$ be as in part \ref{item:capturing_trees}. Then
$p[S^P]^{V[G]}\sub p[U]^{V[G]}$ and likewise for $T^P,W$.
\end{clm}
\begin{proof}
Let $\kappa+1=\lh(\Tt)$ and $\eta\in X\cut\kappa$
and $\Uu=\pi_\eta\Tt$, a tree on $V$ which is above $\eta+1$. Work in $V[G]$,
where $\kappa$ is countable. Then $[0,\kappa]^\Tt$ is $\pi_\eta$-realizable;
i.e. there is 
 an elementary $\sigma:M^\Tt_\infty\to M^\Uu_\infty$
such that $\sigma\com i^\Tt_{0\infty}=\pi_\eta$.
(For let $\Uu^+$ be the tree on $V[G]$ equivalent to $\Uu$. We have the copy map \[\sigma':M^\Tt_\infty\to i^{\Uu^+}_{0\infty}(L_\theta(V_{\delta_\eta},S,T)), \]
and $\sigma'\com i^{\Tt}_{0\infty}=i^{\Uu^+}_{0\infty}(\pi_\eta)$. By absoluteness
and since $\Tt$ is countable in $M^{\Uu^+}_\infty$,
there is a map $\sigma''\in M^{\Uu^+}_\infty$
with the same properties. This pulls back to $V[G]$ under the elementarity of $i^{\Uu^+}_{0\infty}$.)

So $\sigma(S^P,T^P)=(S,T)$, but then
\[ V[G]\sats\text{``}p[S^P]\sub p[S]=p[U]\text{''},\]
and likewise for $T^P,T,W$, as required.
\end{proof}

This completes the proof.
\end{proof}

We now proceed to the scale construction:
\begin{proof}[Proof of Corollary \ref{cor:local} (stationary tower free)]
Fix $M,\Sigma$ witnessing \ref{lem:capturing_mouse} (with respect to $\delta_2,\delta_3,U,W$).
It suffices to see that there is a scale on $p[U]$ which is $\Delta^1_2(\Sigma\rest\HC)$.

Write ${<^M}=W^M$.
Let $\mathscr{E}^M$ be the set of $E\in V_{\delta^M}^M$
such that $M\sats$``$E$ is a  ${<^M}$-nice extender''. 
Given  $E,F\in\mathscr{E}^M$, say that $E<^M_eF$ iff
either $M\sats$``the Mitchell order rank of $E$ is strictly less than the Mitchell order rank of 
$F$'', or $M\sats$``$E,F$ have the same Mitchell order rank, and $E<^MF$''.
So we have $(*)_M$, which asserts:
\begin{enumerate}[label=--]
 \item ${<^M_e}\in M$, 
 \item $<^M_e$ well orders the ${<^M}$-nice extenders of $M$, and
\item if $F\in M$ is ${<^M}$-nice and $E\in\Ult(M,F)\sats$``$E$ is ${<^{\Ult(M,F)}}$-nice'' and 
\[ \strength^{\Ult(M,F)}(E)\leq\strength^M(F) \]
then $E\in M\sats$``$E$ is ${<^M}$-nice'' and $E<^M_eF$.
\end{enumerate}
Given an iterate $P$ of $M$, let ${<^P_e}=i_{MP}({<^M_e})$. Clearly then $(*)_P$ holds.

We have verified that $(M,\delta^M,\mathscr{E}^M,{<^M_e})$ is a slightly coherent weak coarse premouse (as in \cite[Definition 3.1]{iter_for_stacks},
and see \cite[p.~8, \S1.1.2]{iter_for_stacks}
for the definition of \emph{suitable} extender).

\begin{rem} We will deal with inflations, particularly terminal inflations; 
see \cite{iter_for_stacks}. 
 Let $\Tt,\Xx$ 
be successor length trees on $M$, via $\Sigma_M$, such that $\Xx$ is a terminal inflation of $\Tt$. 
That is, the last node of $\Xx$ is associated to the last node of $\Tt$,
and in particular, we have a canonical final copy map
\[ \sigma^{\Tt,\Xx}:M^\Tt_\infty\to M^\Xx_\infty. \]
If $\Yy$, of successor length on $M$, via $\Sigma_M$, is a terminal inflation of $\Xx$, then 
$\sigma^{\Xx,\Yy}\com\sigma^{\Tt,\Xx}=\sigma^{\Tt,\Yy}$.
If $\left<\Xx_n\right>_{n<\om}$, each of successor length on $M$, via $\Sigma_M$, are such that 
$\Xx_{n+1}$ is a terminal inflation of $\Xx_n$, then we have the inflationary comparison
$\Xx_\om$ of the sequence $\left<\Xx_n\right>_{n<\om}$ (see \cite{iter_for_stacks}). Here $\Xx_\om$
is a terminal inflation of each $\Xx_n$, and
$\sigma^{\Xx_n,\Xx_\om}\com\sigma^{\Xx_m,\Xx_n}=\sigma^{\Xx_m,\Xx_\om}$.)
\end{rem}

The \emph{$<^M$-nice extender algebra}
is the version of the extender algebra of $M$
at $\delta^M$, in which we only use ${<^M}$-nice
extenders to induce axioms. Likewise for iterates $P$ of $M$.

\begin{clm}\label{lem:gen_inf} Let $\Tt$ be any successor length ${<^M}$-nice tree on $M$, via $\Sigma_M$. 
Let $x\in\RR$.
Then there is a ${<^M}$-nice $\Xx$ via $\Sigma_M$,
such that $\Xx$ is a terminal inflation of $\Tt$,  via $\Sigma_M$, such that 
$x$ is ${<^{M^\Xx_\infty}}$-nice extender algebra generic over $M^\Xx_\infty$ at $\delta^{M^\Xx_\infty}$.\end{clm}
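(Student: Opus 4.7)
The plan is to adapt Neeman's genericity iteration to this setting: restricting to $<^M$-nice extenders and realizing the iteration as a chain of terminal inflations of $\Tt$ guided by $\Sigma$. Recursively build a sequence $\Xx_0 \trianglelefteq \Xx_1 \trianglelefteq \cdots$ of successor-length $<^M$-nice trees on $M$ via $\Sigma$, each $\Xx_{n+1}$ a terminal inflation of $\Xx_n$ (and hence of $\Tt$, by composition of the final copy maps $\sigma^{\Xx_m,\Xx_n}$), addressing at each stage the $<^{P_n}_e$-least axiom of the nice extender algebra currently violated by $x$, where $P_n = M^{\Xx_n}_\infty$.

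Set $\Xx_0 = \Tt$. At stage $n$, if $x$ is $<^{P_n}$-nice extender algebra generic over $P_n$, halt and output $\Xx = \Xx_n$. Otherwise, by definition the violated axioms are precisely those induced by nice extenders in $\mathscr{E}^{P_n}$; let $E\in\mathscr{E}^{P_n}$ be $<^{P_n}_e$-least such that $x$ violates the axiom it induces. Define $\Xx_{n+1}$ by extending $\Xx_n$ on its main branch with (the appropriate image of) $E$, applied at the tree-predecessor node dictated by Neeman's recipe so that the axiom is forced in the resulting ultrapower. The further iteration produces a canonical map $P_n \to P_{n+1}$ which serves as the final copy map $\sigma^{\Xx_n,\Xx_{n+1}}$, making $\Xx_{n+1}$ a terminal inflation of $\Xx_n$. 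That $\Xx_{n+1}$ is $<^M$-nice follows from the niceness of $E$ combined with property $(*)$ for the iterate $P_n$; that it is via $\Sigma$ follows from the strong hull condensation of $\Sigma$ and its decency for $<^M$-nice trees of length $<\delta_2$ (the total length used remains well below $\delta_2$).

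If the recursion halts at a finite stage we are done. Otherwise, let $\Xx_\om$ be the inflationary comparison of $\langle \Xx_n\rangle_{n<\om}$ as in the remark, so $\Xx_\om$ is via $\Sigma$ and is a terminal inflation of each $\Xx_n$, hence of $\Tt$. We claim $x$ is $<^{P_\om}$-nice extender algebra generic over $P_\om = M^{\Xx_\om}_\infty$. If not, pick a violated axiom $\varphi$; its preimages under the final copy maps $\sigma^{\Xx_n,\Xx_\om}:P_n\to P_\om$ are axioms of the corresponding nice extender algebras violated by $x$. Since $<_e$ is preserved by these maps (by $(*)$ for the relevant iterates), the extenders addressed at stages $n, n+1, \ldots$ would form a $<^{P_\om}_e$-descending sequence, contradicting wellfoundedness.

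The main obstacle is precisely this limit-stage analysis: ensuring that the minimality enforced at each stage of the recursion translates, under the copy maps produced by the inflationary comparison, into genuine monotone progress in $<^{P_\om}_e$. This rests on the compatibility of $<_e$ with iteration encoded in $(*)_M$ (and transported to iterates), together with the preservation of niceness under nice ultrapowers of bounded strength, both of which were set up in the construction of $M$ and $\Sigma$ precisely to support arguments of this kind.
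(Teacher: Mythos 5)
The paper's own proof is a one-liner: it cites \cite[Theorem~5.6]{iter_for_stacks}, which performs Neeman-style genericity iterations in the language of terminal inflations, and observes that since only $<^M$-nice extenders induce axioms of the $<^M$-nice extender algebra, only $<^M$-nice extenders are ever used as inflationary extenders. You instead set out to re-derive that theorem from scratch. That is a legitimate thing to try, but as written your argument has a genuine gap in the termination analysis.

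The problem is your claim that the recursion terminates by stage $\omega$. A genericity iteration on a countable premouse can in general have any countable successor length; there is no reason it should close off after $\omega$ steps, and the standard termination argument for such iterations is not a descending-chain argument in the Mitchell order. Your proposed contradiction --- that the extenders chosen at stages $n,n+1,\dots$ would push forward to a $<^{P_\omega}_e$-descending chain --- is not justified. The extender $E_n$ is $<^{P_n}_e$-least violating, but after it is used there is no reason for $E_{n+1}$ to sit $<^{P_{n+1}}_e$-below $\sigma^{\Xx_n,\Xx_{n+1}}(E_n)$; indeed the whole design of a genericity iteration is that the axiom of $E_n$ is made to hold afterwards, so its image is \emph{removed} from the pool of violating extenders, not kept as an upper bound. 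What actually drives termination is the monotone growth of the lengths/strengths of the used extenders together with wellfoundedness of the direct limit, and this has to be pushed through transfinitely (past every countable limit ordinal, not just $\omega$). There is also an absoluteness issue you pass over: pulling a violated axiom of $P_\omega$ back along the final copy maps and asserting that its preimages are still violated by $x$ requires an argument, since satisfaction of the infinitary disjunctions appearing in the extender algebra axioms is not automatically downward absolute under arbitrary elementary maps.

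If you want to prove this claim without a black-box citation, you should import the actual termination argument for genericity iterations (as in \cite{iter_for_stacks}, or Steel's or Neeman's presentations), carried out along the whole transfinite tower of inflations, rather than trying to stop at $\omega$ with a Mitchell-order descent. The rest of your sketch --- using $(*)_{P_n}$ to preserve $<^M$-niceness, taking inflationary comparisons at limits, and restricting to nice extenders because those are the only ones inducing axioms --- is consistent with what the cited theorem provides and does match the paper's reduction.
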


\begin{proof} By \cite[Theorem 5.6] {iter_for_stacks}, noting that we only ever need to use ${<^{M^\Xx_\gamma}}$-nice extenders as inflationary extenders, by definition of the ${<^{M^\Xx_\gamma}}$-nice extender algebra.
\end{proof}

\begin{clm}\label{clm:stabilize_lx}
Let $x\in p[T]$.
For every ${<^M}$-nice countable successor length nice tree $\Tt$ via $\Sigma$ there is a
${<^M}$-nice countable successor length  
tree $\Uu$ via $\Sigma$ such that $(\Tt,\Uu)$ is a  terminal inflation, $x\in 
p[T^{M^\Uu_\infty}]$, and whenever $(\Uu,\Xx)$ is a terminal inflation with $\Xx$ being ${<^M}$-nice and via $\Sigma$,
then letting $\sigma=\sigma^{\Uu,\Xx}$ and $l^\Uu_x=\leftmost(T^{M^\Uu_\infty}_x)$ (the left-most branch through the $x$-section of $T^{M^{\Uu_\infty}}$), we have
\[ \sigma``l^\Uu_x\sub T^{M^\Xx_\infty}_x\text{ and 
}\sigma``l^\Uu_x=\leftmost(T^{M^\Xx_\infty}_x).\]
\end{clm}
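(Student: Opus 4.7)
The plan is to prove this in two main stages: a \emph{capturing} stage producing an inflation over which $x$ appears in the $T^{M^\Uu_\infty}$-projection, and then a \emph{rank descent} stage that stabilizes the leftmost branch position by position, culminating in a single inflationary comparison that stabilizes everything.

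\textbf{Capturing $x$.} First I would apply Claim~\ref{lem:gen_inf} to $\Tt$ and $x$ to obtain a countable successor length ${<^M}$-nice terminal inflation $\Uu_0$ of $\Tt$ via $\Sigma$ such that $x$ is ${<^{M^{\Uu_0}_\infty}}$-nice extender algebra generic over $M^{\Uu_0}_\infty$ at $\delta^{M^{\Uu_0}_\infty}$. Writing $\kappa+1=\lh(\Uu_0)$ and letting $G$ be $V$-generic for $\Coll(\om,\kappa)$, the absolute projecting clause (item~\ref{item:M_has_trees} of \ref{lem:capturing_mouse}) applied in $M^{\Uu_0}_\infty[x]\subseteq V$ forces $x\in p[S^{M^{\Uu_0}_\infty}]\cup p[T^{M^{\Uu_0}_\infty}]$. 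By item~\ref{item:capturing_trees}, $p[S^{M^{\Uu_0}_\infty}]^{V[G]}\subseteq p[U]^{V[G]}$; since $x\in p[T]=p[W]$ and $p[U]\cap p[W]=\emptyset$, the $S$-alternative is ruled out, so $x\in p[T^{M^{\Uu_0}_\infty}]$ in $V$. For any terminal inflation $\Xx$ of $\Uu_0$ via $\Sigma$, $\sigma^{\Uu_0,\Xx}$ sends $T^{M^{\Uu_0}_\infty}$ to $T^{M^\Xx_\infty}$, so $x\in p[T^{M^\Xx_\infty}]$ is preserved automatically along all further terminal inflations.

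\textbf{Position-by-position stabilization.} Next I would inductively construct a chain $\Uu_0\to\Uu_1\to\Uu_2\to\cdots$ of countable successor length ${<^M}$-nice terminal inflations via $\Sigma$, each inflating the previous, so that $\Uu_k$ stabilizes the first $k$ coordinates: for every terminal inflation $\Xx$ of $\Uu_k$ via $\Sigma$, $\sigma^{\Uu_k,\Xx}(l^{\Uu_k}_x(j))=l^\Xx_x(j)$ for all $j<k$. For the successor step, suppose no such $\Uu_{k+1}$ exists. Then any terminal inflation of $\Uu_k$ still fails at position $k$, and we may build terminal inflations $\Uu_k=\Tt_0',\Tt_1',\ldots$ via $\Sigma$ with $\sigma^{\Tt_n',\Tt_{n+1}'}(l^{\Tt_n'}_x(k))>l^{\Tt_{n+1}'}_x(k)$: the inequality $l^{\Tt_{n+1}'}_x\leq_{\lex}\sigma^{\Tt_n',\Tt_{n+1}'}\circ l^{\Tt_n'}_x$ holds because $l^{\Tt_{n+1}'}_x$ is leftmost, the two branches must agree on positions $<k$ by inherited stabilization from $\Uu_k$, and disagreement (chosen witnessing failure at position $k$) then forces a strict drop at $k$. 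Form the inflationary comparison $\Tt_\om'$ of $\{\Tt_n'\}$ as recalled in the remark preceding Claim~\ref{lem:gen_inf}. Then using $\sigma^{\Tt_n',\Tt_\om'}=\sigma^{\Tt_{n+1}',\Tt_\om'}\circ\sigma^{\Tt_n',\Tt_{n+1}'}$ and the order-preservation of $\sigma^{\Tt_{n+1}',\Tt_\om'}$ on ordinals, the sequence $\sigma^{\Tt_n',\Tt_\om'}(l^{\Tt_n'}_x(k))$ is strictly descending in $\OR^{M^{\Tt_\om'}_\infty}$, contradicting wellfoundedness.

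\textbf{Taking the common inflation.} Finally let $\Uu$ be the inflationary comparison of the sequence $\{\Uu_k\}_{k<\om}$, a countable successor length ${<^M}$-nice tree via $\Sigma$ terminally inflating each $\Uu_k$, with $\sigma^{\Uu_{k+1},\Uu}\circ\sigma^{\Uu_k,\Uu_{k+1}}=\sigma^{\Uu_k,\Uu}$. For each $k$, applying position-$k$ stabilization at $\Uu_{k+1}$ with witness $\Xx=\Uu$ yields $\sigma^{\Uu_{k+1},\Uu}(l^{\Uu_{k+1}}_x(k))=l^\Uu_x(k)$; for any further terminal inflation $\Xx$ of $\Uu$ via $\Sigma$, the same stabilization at $\Uu_{k+1}$ gives $\sigma^{\Uu_{k+1},\Xx}(l^{\Uu_{k+1}}_x(k))=l^\Xx_x(k)$; combining via $\sigma^{\Uu_{k+1},\Xx}=\sigma^{\Uu,\Xx}\circ\sigma^{\Uu_{k+1},\Uu}$ produces $\sigma^{\Uu,\Xx}(l^\Uu_x(k))=l^\Xx_x(k)$ for every $k$, i.e., $\sigma^{\Uu,\Xx}``l^\Uu_x=\leftmost(T^{M^\Xx_\infty}_x)$, as required.

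\textbf{Main obstacle.} The delicate points are (a) checking that the inflationary comparison of a countable $\omega$-chain of countable successor length ${<^M}$-nice $\Sigma$-trees is itself countable, successor length, ${<^M}$-nice, and via $\Sigma$, with the commuting final copy maps one needs, which rests on the strong hull condensation of $\Sigma$ and the machinery of \cite{iter_for_stacks}; and (b) reducing failures of stabilization against arbitrary (possibly uncountable) ${<^M}$-nice $\Sigma$-inflations $\Xx$ to failures against countable ones, so that the rank descent stays within countable trees---this is handled by taking a sufficiently large countable hull containing a witness branch and applying strong hull condensation. Given these, the rank descent itself is immediate from order preservation of the copy maps on ordinals.
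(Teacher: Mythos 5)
Your proof is correct in spirit and uses the same essential ingredients as the paper's (Claim~\ref{lem:gen_inf} to capture $x$, then a rank-descent argument via terminal inflations and wellfoundedness of inflationary comparisons), but the organization of the descent is genuinely different. The paper builds a \emph{single} $\om_1$-length chain $\left<\Tt_\alpha\right>_{\alpha\leq\om_1}$ of terminal inflations, imposing a strict $<_\lex$-drop of the leftmost branch at each successor step, taking comparison inflations at limits; the contradiction is then extracted after the fact by observing that, in the wellfounded $M^{\Tt_{\om_1}}_\infty$, each coordinate $n$ of the branch must stabilize at some $\alpha_n<\om_1$, so that beyond $\sup_n\alpha_n$ no strict lex-drop is possible. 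You instead isolate the per-coordinate stabilization as an explicit subinduction: for each $k$, failure to stabilize position $k$ produces a length-$\om$ chain with a strict drop at position $k$ at every step, and wellfoundedness of the $\om$-step comparison inflation already yields the contradiction; you then combine the countably many stabilizers $\Uu_k$ by one more $\om$-step comparison inflation. What this buys you is that you only ever invoke inflationary comparisons along chains of order type $\om$, never at uncountable cofinality; the price is that you have to maintain and verify the inductive hypothesis ``$\Uu_k$ stabilizes positions $<k$'' and re-derive the commuting identities at the final step, where the paper gets away with a single construction. Both proofs share the same glossed-over point, which you rightly flag: to keep the chain countable one must reduce a failure of leftmost-stability witnessed by an arbitrary (possibly uncountable) $\Xx$ to one witnessed by a countable $\Xx$; the paper dispatches this with ``using the contradictory hypothesis'' and you sketch a countable-hull-plus-strong-hull-condensation argument, which is the right idea but would need the details you acknowledge (in particular, that the hull of $\Xx$ is again via $\Sigma$, is a terminal inflation of $\Uu'$, and that leftmostness of the collapsed branch is preserved).
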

\begin{proof}
Suppose not. Then we can define a sequence $\left<\Tt_\alpha\right>_{\alpha\leq\om_1}$ such that:
\begin{enumerate}
 \item $\Tt_\alpha$ is a ${<^M}$-nice, successor length tree via $\Sigma_M$; write 
$T^\alpha=T^{M^{\Tt_\alpha}_\infty}$ and $l^\alpha_x=\leftmost(T^\alpha_x)$ given $x\in 
p[T^\alpha]$,
\item if $\alpha<\om_1$ then $\Tt_\alpha$ has countable length,
 \item $x\in p[T^0]$,
 \item $(\Tt_{\alpha+1},\Tt_\alpha)$ is a terminal inflation for all $\alpha<\om_1$,
 \item for limit $\lambda$, $\Tt_\lambda$ is comparison inflation of 
$\left\{\Tt_\alpha\right\}_{\alpha<\lambda}$,
\item thus, for $\alpha\leq\om_1$, we have $x\in p[T^\alpha]$, and for $\alpha<\beta\leq\om_1$,
$(\Tt_\alpha,\Tt_\beta)$ is a  terminal inflation; write 
$\sigma^{\alpha\beta}=\sigma^{\Tt_\alpha,\Tt_\beta}$,
\item $l^{\alpha+1}_x<_\lex\sigma^{\alpha,\alpha+1}``l^\alpha_x$, for all $\alpha<\om_1$,
\item thus, $l^\beta_x\leq_\lex\sigma^{\alpha\beta}`` l^\alpha_x$ for
all $\alpha<\beta\leq\om_1$.
\end{enumerate}

We start by getting $\Tt_0$ with $x\in p[T_0]$ by using Claim \ref{lem:gen_inf}.
The remainder of the sequence is produced by using the contradictory hypothesis
and the existence of  simultaneous inflations at limit stages.

Now $M^{\Xx_{\om_1}}_\infty$ is wellfounded and for each $\alpha<\beta<\om_1$,
\[ \sigma^{\alpha\om_1}=\sigma^{\beta\om_1}\com\sigma^{\alpha\beta}.\]
But then an easy induction on $n<\om$ shows that there is $\alpha_n<\om_1$ such that for all 
$\alpha\in(\alpha_n,\om_1)$,
\[ \sigma^{\alpha_n\alpha}(l^{\alpha_n}_x(n))=l^{\alpha}_x(n). \]
But since $\sup_{n<\om}\alpha_n<\om_1$, this clearly gives a contradiction.
\end{proof}
\begin{clm}\label{clm:compare_lx,ly}
 For any ${<^M}$-nice, countable successor length trees $\Tt,\Uu$ via $\Sigma_M$ there is a ${<^M}$-nice 
countable $\Xx$ via $\Sigma_M$ such that both $(\Xx,\Tt)$ and $(\Xx,\Uu)$ are terminal  
inflations.
\end{clm}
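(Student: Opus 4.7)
The plan is to construct $\Xx$ so that it is itself a terminal inflation of both $\Tt$ and $\Uu$, using the inflationary comparison machinery of \cite{iter_for_stacks} in the spirit of Claim~\ref{clm:stabilize_lx}. Starting from the two countable trees $\Tt,\Uu$ on $M$ via $\Sigma$, the idea is to iteratively inflate $\Tt$ through countably many stages, incrementally absorbing the extenders of $\Uu$, until the resulting tree is a terminal inflation of both.

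Specifically, I would define by recursion an $\om$-sequence of countable, ${<^M}$-nice, successor length trees $\Xx_0,\Xx_1,\Xx_2,\ldots$ via $\Sigma_M$, with $\Xx_0=\Tt$ and $\Xx_{n+1}$ a terminal inflation of $\Xx_n$. At stage $n+1$, given $\Xx_n$, pick the least $\alpha_n<\lh(\Uu)$ at which $\Uu$ is not yet reflected in $\Xx_n$ (in the natural book-keeping of the inflation process), and choose $\Xx_{n+1}$ to be a terminal inflation of $\Xx_n$ that inserts the extender $E^\Uu_{\alpha_n}$ (appropriately copied via the tracking copy map between the relevant models). Existence at each stage is a standard feature of inflation in \cite{iter_for_stacks}: we may always inflate by any ${<^M}$-nice extender available from the current tail model, and the well-ordering $<^M_e$ furnished by the $(*)_M$ conditions tells us canonically which extender to use. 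Let $\Xx=\Xx_\om$ be the inflationary comparison of $\left<\Xx_n\right>_{n<\om}$, as described in the remark preceding Claim~\ref{lem:gen_inf}.

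Then $\Xx$ is a terminal inflation of each $\Xx_n$ and hence of $\Xx_0=\Tt$; and by construction $\Xx$ also absorbs every extender of $\Uu$, so it is a terminal inflation of $\Uu$ as well. Countability of $\Xx$ follows from the countability of each $\Xx_n$ and of $\om$, ${<^M}$-niceness is maintained by design, and $\Xx$ is via $\Sigma_M$ because $\Sigma_M$ has strong hull condensation, as established in the preceding claim. The main obstacle, and the place where \cite{iter_for_stacks} does heavy lifting, is verifying the absorption/termination statement, i.e.\ that after $\om$ many stages every extender of $\Uu$ has genuinely been captured, so that $\Xx$ is indeed a terminal inflation of $\Uu$ and not merely of $\Tt$. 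This amounts to a countable comparison-style argument: since $\Uu$ has only countably many extenders and, by the coherence properties of $<^M_e$ built into $(*)_M$, no extender needs to be processed twice, the inflation process stabilizes by stage $\om$.
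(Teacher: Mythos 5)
The paper's proof is a single line: define $\Xx$ to be the comparison inflation of $\{\Tt,\Uu\}$ and cite \cite[\S5.1]{iter_for_stacks}, which proves exactly that the comparison inflation of a countable collection of countable trees via a strategy with strong hull condensation is a terminal inflation of each tree in the collection. Your proposal instead attempts to reconstruct that machinery from scratch, and this is where it runs into trouble.

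There are two genuine gaps in your construction. First, the local step is not well-defined as stated: to ``insert the extender $E^\Uu_{\alpha_n}$ (appropriately copied via the tracking copy map)'' you need a copy map from the $\alpha_n$-th model of $\Uu$ into some model of $\Xx_n$, but $\Xx_n$ is by construction only an inflation of $\Tt$ (since $\Xx_0=\Tt$), so there is no canonical map from $\Uu$'s models to $\Xx_n$'s models available at this stage. Setting up the correct book-keeping that simultaneously tracks both $\Tt$ and $\Uu$ (a tree-embedding into $\Xx_n$ from $\Tt$ and a partial such thing from $\Uu$, with associated copy maps) is precisely what the comparison-inflation machinery of \cite[\S5.1]{iter_for_stacks} provides; it is not something one can treat as obvious. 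Second, and more seriously, you identify the ``absorption/termination'' step — showing that $\Xx_\om$ is a terminal inflation of $\Uu$ and not merely of $\Tt$ — as ``the main obstacle,'' and then do not actually overcome it. The sentence ``since $\Uu$ has only countably many extenders and \dots\ no extender needs to be processed twice, the inflation process stabilizes by stage $\om$'' is not an argument: being a terminal inflation of $\Uu$ is a structural relationship (requiring that the last node of $\Xx$ be associated to the last node of $\Uu$ under a suitable tree-embedding), and proving that the process achieves this is a nontrivial comparison-style argument. That argument is the content of the cited result. In short, you have rediscovered the shape of the right construction but replaced the hard part with a gesture at it; the paper simply appeals to the theorem that makes it work. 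A minor additional point: $\Xx$ is via $\Sigma_M$ because the comparison inflation is built by consulting $\Sigma_M$ at every limit stage, not ``because $\Sigma_M$ has strong hull condensation'' — strong hull condensation is what guarantees the inflation process does not break down, not what certifies $\Xx$ is according to $\Sigma_M$ once it is built by $\Sigma_M$.
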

\begin{proof}
Define $\Xx$ to be the comparison inflation of $\{\Tt,\Uu\}$; see \cite[\S5.1]{iter_for_stacks}.
\end{proof}

We now define the scale on $p[T]$. Given $x,y\in p[T]$,
we set $x\leq_n y$ iff for every countable ${<^M}$-nice successor length $\Tt$ via $\Sigma$
there is a countable ${<^M}$-nice $\Xx$ via $\Sigma$ and such that $(\Xx,\Tt)$ is a terminal inflation
and $(x,y)$ is extender algebra generic over $M^\Xx_\infty$ at $\delta^{M^\Xx_\infty}$, and letting 
$l_x=\leftmost(T^{M^\Xx_\infty}_x)$ and $l_y$ likewise, then
\[ 
(x(0),l_x(0),\ldots,x(n-1),l_x(n-1))\leq_\lex(y(0),l_y(0),\ldots,y(n-1),l_y(n-1)).\]

\begin{clm}
For each $n<\om$, ${\leq_n}\rest p[T]$ is a prewellorder of $p[T]$.
\end{clm}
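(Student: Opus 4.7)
The plan is to establish reflexivity, totality, transitivity, and wellfoundedness of $\leq_n$ on $p[T]$, with wellfoundedness being the core step. Everything rests on an \emph{invariance principle}: once a terminal inflation $\Xx_0$ stabilizes the leftmost branches $l_{x_i}$ for all reals in a countable set $\{x_i\}\sub p[T]$ in the sense of Claim~\ref{clm:stabilize_lx}, then for any further $\Xx$ terminally inflating $\Xx_0$ with the relevant pairs $(x_i,x_j)$ extender algebra generic over $M^\Xx_\infty$, the lex comparison of the tuples $(x_i(0),l^\Xx_{x_i}(0),\ldots,x_i(n-1),l^\Xx_{x_i}(n-1))$ depends only on the stabilized data at $\Xx_0$, not on $\Xx$. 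Indeed $l^\Xx_{x_i}=\sigma^{\Xx_0,\Xx}``l^{\Xx_0}_{x_i}$ by stabilization, each $x_i$ is fixed by $\sigma^{\Xx_0,\Xx}$ as it is a real, and $\sigma^{\Xx_0,\Xx}$ is elementary and hence order-preserving on ordinals.

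First I would prove a simultaneous stabilization lemma: for any countable $\{x_i\}_{i<\om}\sub p[T]$ and starting tree $\Tt$, there is a countable ${<^M}$-nice successor-length tree $\Xx_\infty$ via $\Sigma$, terminally inflating $\Tt$, that stabilizes every $l_{x_i}$. The construction iterates Claim~\ref{clm:stabilize_lx}: build $\Tt=\Tt_0,\Tt_1,\ldots$ with $\Tt_{i+1}$ stabilizing $x_i$ past $\Tt_i$, and take $\Xx_\infty$ to be the inflationary comparison of $\{\Tt_i\}_{i<\om}$ as in the Remark preceding Claim~\ref{lem:gen_inf}; the composition law $\sigma^{\Tt_j,\Xx_\infty}\com\sigma^{\Tt_i,\Tt_j}=\sigma^{\Tt_i,\Xx_\infty}$ ensures that stabilization of $x_i$ secured at $\Tt_i$ descends to $\Xx_\infty$. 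Combining this with Claim~\ref{lem:gen_inf} applied to a single real coding $\langle x_i\rangle_{i<\om}$ produces one $\Xx$ terminally inflating $\Xx_\infty$ at which all $l_{x_i}$ remain stable, every pair $(x_i,x_j)$ is extender algebra generic over $M^\Xx_\infty$, and hence each $x_i\in p[T^{M^\Xx_\infty}]$ by Lemma~\ref{lem:capturing_mouse}(\ref{item:capturing_trees}).

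Reflexivity is immediate from Claim~\ref{lem:gen_inf}. For totality, if neither $x\leq_n y$ nor $y\leq_n x$, there exist witnesses $\Tt_1,\Tt_2$ such that every admissible further extension past $\Tt_1$ forces a strict lex inequality in one direction, and past $\Tt_2$ in the other; using Claim~\ref{clm:compare_lx,ly} to find a common predecessor, then the stabilization lemma on $\{x,y\}$ and Claim~\ref{lem:gen_inf} to produce a single $\Xx$ past both $\Tt_1,\Tt_2$ with $(x,y)$ generic over $M^\Xx_\infty$, forces both strict inequalities at $\Xx$, contradicting totality of $\leq_\lex$ inside $M^\Xx_\infty$. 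Transitivity is the parallel argument on $\{x,y,z\}$, using transitivity of $\leq_\lex$ inside $M^\Xx_\infty$.

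The main step is wellfoundedness. Supposing an infinite descent $x_0>_nx_1>_n\cdots$, I apply the simultaneous stabilization lemma to $\{x_i\}_{i<\om}$ together with Claim~\ref{lem:gen_inf} on a real coding the whole sequence, obtaining one $\Xx$ at which all $l^\Xx_{x_i}$ are stable and every pair $(x_i,x_{i+1})$ is generic over $M^\Xx_\infty$. By the invariance principle, each strict relation $x_i>_nx_{i+1}$ translates at $\Xx$ to the strict lex inequality between the corresponding tuples, yielding a strictly $<_\lex$-decreasing $\om$-sequence inside $(\om\times\lambda)^n$ for $\lambda$ the ordinal on which $T^{M^\Xx_\infty}$ lives---impossible, since $\leq_\lex$ on a wellordered finite product is a wellorder. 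The principal obstacle I anticipate is the limit-stage bookkeeping in the simultaneous stabilization lemma, namely checking that the stabilization property at each $\Tt_i$ truly transfers to the $\om$-th inflationary comparison $\Xx_\infty$; this is exactly handled by the composition law for final copy maps recalled in the Remark preceding Claim~\ref{lem:gen_inf}.
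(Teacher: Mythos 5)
Your proof is correct and takes essentially the same route as the paper: the paper's argument for comparativity, transitivity, and wellfoundedness is exactly to find (via Claims \ref{clm:stabilize_lx} and \ref{clm:compare_lx,ly}, plus the comparison inflation for $\omega$-sequences) a single terminal inflation $\Xx$ that simultaneously stabilizes the leftmost branches of all relevant reals and makes the relevant pairs generic, then observe that the lex comparison at $\Xx$ is independent of the choice of such $\Xx$ because any two choices embed into a common terminal inflation under whose final copy maps reals are fixed and ordinals are order-preserved. You have merely extracted and named the two implicit ingredients (the ``invariance principle'' and ``simultaneous stabilization lemma'') that the paper uses in-line.
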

\begin{proof}
Fix $n<\om$. Clearly $\leq_n\rest p[T]$ is reflexive,
and using Claim \ref{clm:stabilize_lx}, it is easy to see that it is transitive (and cf.~the proof to follow). For comparitivity,
let $x,y\in p[T]$ and let $\Tt$ be a countable successor length ${<^M}$-nice tree  via $\Sigma$. Using Claims \ref{clm:stabilize_lx} and \ref{clm:compare_lx,ly}, we can find a countable ${<^M}$-nice $\Xx$ such that $(\Tt,\Xx)$ simultaneously witnesses Claim \ref{clm:stabilize_lx}
for both $x$ and $y$, and such that $(x,y)$ is extender algebra generic over $M^\Xx_\infty$ at $\delta^{M^\Xx_\infty}$. Let $(\Tt',\Xx')$ be likewise. But then letting $l_x=\mathrm{left}(T^{M^\Xx_\infty}_x)$ and $l_y$ likewise,
and $l'_x=\mathrm{left}(T^{M^{\Xx'}_\infty})$ and $l'_y$ likewise, we have
\[ (x(0),l_x(0),\ldots,x(n-1),l_x(n-1))\leq_{\mathrm{lex}}(y(0),l_y(0),\ldots,y(n-1),l_y(n-1)) \]
iff
\[ (x(0),l'_x(0),\ldots,x(n-1),l'_x(n-1))\leq_{\mathrm{lex}}(y(0),l'_y(0),\ldots,y(n-1),l'_y(n-1)), \]
since by Claim \ref{clm:compare_lx,ly}, we can find $\Xx''$
such that $(\Xx,\Xx'')$ and $(\Xx',\Xx'')$
are terminal  inflations, and by the properties given by Claim \ref{clm:stabilize_lx}.

Finally, ${\leq_n}\rest p[T]$ is wellfounded, by similar kinds of considerations. (Given $\left<x_n\right>_{n<\om}$
and given a countable $\Tt$, note that we can find a countable $\Xx$ such that $(\Tt,\Xx)$
is a terminal  inflation and $\Xx$ simultaneously witnesses Claim \ref{clm:stabilize_lx} for all $x_n$.)\end{proof}

Let $\varphi_n(x)$ be the $\leq_n$-rank of $x\in p[T]$.

\begin{clm} $\left<{\leq_n}\right>_{n<\om}$ is a scale on $p[T]$.
 \end{clm}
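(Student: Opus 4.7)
The plan is to verify the semicontinuity clause in the definition of a scale. Given a sequence $(x_i)_{i<\om}$ in $p[T]$ with $x_i \to x \in \RR$ pointwise, and with $\varphi_n(x_i)$ eventually equal to some $\alpha_n$ for each $n$, we need to show $x \in p[T]$ and $\varphi_n(x) \leq \alpha_n$ for every $n$. The essential technical task is producing a single countable ${<^M}$-nice successor-length tree $\Xx$ via $\Sigma$ that simultaneously witnesses Claim \ref{clm:stabilize_lx} for every $x_i$ and makes each of $x$ and the $x_i$'s extender-algebra generic over $P \eqdef M^\Xx_\infty$.

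Such an $\Xx$ would be built as a countable inflationary comparison of a sequence $\Tt_0 = \Xx_0, \Xx_1, \ldots$ of terminal inflations: at stage $i$ apply Claim \ref{lem:gen_inf} to absorb one new real into extender-algebra genericity and Claim \ref{clm:stabilize_lx} to stabilize the leftmost branch for one new $x_i$. Since stabilization and genericity persist under further terminal inflation, the limit $\Xx = \Xx_\om$ handles everything simultaneously, is countable, and is via $\Sigma$. Write $l_i = \leftmost(T^P_{x_i})$.

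Because $\varphi_n(x_i) = \alpha_n$ eventually, the interleaved tuples $(x_i(0), l_i(0), \ldots, x_i(n-1), l_i(n-1))$ are eventually constant, equal to some fixed $\tau_n$; by $x_i \to x$ the even entries of $\tau_n$ are $(x(0), \ldots, x(n-1))$, and the odd entries define a coherent sequence $\bar{l}$ of ordinals whose initial segments lie in $T^P_x$ (since they agree with $l_i \rest n$ for large $i$), so $\bar{l}$ is a branch through $T^P_x$ and $x \in p[T^P]$. With $\kappa+1 = \lh(\Xx)$ and $G$ any $V$-generic for $\Coll(\om, \kappa)$, item \ref{item:capturing_trees} of Lemma \ref{lem:capturing_mouse} gives $p[T^P]^{V[G]} \sub p[W]^{V[G]} = p[T]^{V[G]}$, whence $x \in p[T]^{V[G]}$; by absoluteness of $p[T]$ to small generic extensions (via complementation of $S$ and $T$ in $V[G]$, inherited from $(\delta_3+1)$-absolute complementation of $U,W$), this descends to $x \in p[T]$. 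Finally, fixing $i$ large enough that the $\Xx$-tuple for $x_i$ is $\tau_n$ and $\varphi_n(x_i) = \alpha_n$, we have $\leftmost(T^P_x) \leq_\lex \bar{l}$ because $\bar{l}$ is merely some branch through $T^P_x$, so the $\Xx$-tuple for $x$ is lex-at-most $\tau_n$, yielding $x \leq_n x_i$ and thus $\varphi_n(x) \leq \alpha_n$.
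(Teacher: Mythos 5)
Your proof is correct and follows essentially the same route as the paper's: build a single countable terminal inflation $\Xx$ via Claims \ref{lem:gen_inf} and \ref{clm:stabilize_lx} that simultaneously stabilizes leftmost branches for all the $x_i$'s and makes the relevant reals extender-algebra generic, extract the limit branch through $T^{M^\Xx_\infty}_x$, appeal to Lemma \ref{lem:capturing_mouse}(\ref{item:capturing_trees}) plus downward absoluteness to get $x\in p[T]$, and then read off lower semicontinuity from the lexicographic comparison of tuples at $\Xx$. You spell out the genericity-absorption and absoluteness steps in a bit more detail than the paper does, but the argument and its level of rigor in the final $x\leq_n x_i$ step (where, strictly, one should further inflate $\Xx$ to also stabilize for $x$ and then invoke Claims \ref{clm:stabilize_lx} and \ref{clm:compare_lx,ly} to cover arbitrary starting $\Tt'$) match the paper's.
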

\begin{proof} 
Let $\left<x_\ell\right>_{\ell<\om}$ be such that for all $n<\om$, $\varphi_n(x_\ell)$ is constant for large $\ell<\om$. Let $x=\lim_{\ell\to\om}x_\ell$. Let $\Tt$ be any countable ${<^M}$-nice tree via $\Sigma$, and let $\Xx$ be such that $(\Tt,\Xx)$
is a terminal inflation and $\Xx$ witnesses Claim \ref{clm:stabilize_lx} simultaneously for all $x_n$. (We get this by producing a sequence $\left<\Xx_n\right>_{n<\om}$ of trees such that $(\Xx_n,\Xx_{n+1})$ is a terminal inflation witnessing the claim with respect to $x_n$,
and then letting $\Xx$ be the comparison inflation of $\{\Xx_n\}_{n<\om}$.) Let $l_{x_n}=\mathrm{left}(T^{M^\Xx_\infty}_{x_n})$. Then note that the limit $l$ of the branches $l_{x_n}$ exists, and $(x,l)\in [T^{M^\Xx_\infty}]$. So by the properties of Lemma \ref{lem:capturing_mouse},
we have $x\in p[T]$. So $\left<{\leq}_n\right>_{n<\om}$ is a semiscale.
Finally, by choice of $\Xx$ and the definition of $l$, we easily get lower semicontinuity,
so it is a scale.
\end{proof}

Clearly $\left<{\leq}_n\right>_{n<\om}$ is $\Delta^1_2(\Sigma\rest\HC)$, so we are done.
\end{proof}

\bibliographystyle{plain}
\bibliography{../bibliography/bibliography}

\end{document}